\numberwithin{equation}{section}
\newtheorem{Theorem}{Theorem}[section]
\newtheorem{Corollary}[Theorem]{Corollary}
\newtheorem{Lemma}[Theorem]{Lemma}
\newtheorem{Claim}[Theorem]{Claim}
 { \theoremstyle{definition}
\newtheorem{Example}[Theorem]{Example}
\newtheorem{Examples}[Theorem]{Examples}
 }
\newcommand{\diag}{\operatorname{diag}}
\newcommand{\Fl}{\operatorname{\CF\ell}}
\newcommand{\Gr}{\operatorname{Gr}}
\newcommand{\Mat}{\operatorname{Mat}}
\newcommand{\maxx}{{\operatorname{max}}}
\newcommand{\minn}{{\operatorname{min}}}
\newcommand{\Pl}{\operatorname{\CP\ell}}
\newcommand{\rank}{\operatorname{rank}}
\newcommand{\hra}{\hookrightarrow}
\newcommand{\iso}{\overset{\sim}{\longrightarrow}}
\newcommand{\isom}{\overset{\sim}{=}}
\newcommand{\lra}{\longrightarrow}
\newcommand{\ra}{\rightarrow}
\newcommand{\bag}{{\bar{g}}}
\newcommand{\bap}{{\bar{p}}}
\newcommand{\bax}{{\bar{x}}}
\newcommand{\tFl}{{\tilde{\CF\ell}}}
\newcommand{\tPl}{{\tilde{\CP\ell}}}
\newcommand{\ty}{\tilde y}
\newcommand{\fb}{\mathfrak b}
\newcommand{\ff}{\mathfrak f}
\newcommand{\fh}{\mathfrak h}
\newcommand{\fl}{\mathfrak l}
\newcommand{\fm}{\mathfrak m}
\newcommand{\fgl}{\mathfrak{gl}}
\newcommand{\fW}{\mathfrak W}
\newcommand{\bno}{\mathbf{0}}
\newcommand{\bk}{\mathbf{k}}
\newcommand{\CB}{\mathcal{B}}
\newcommand{\CC}{\mathcal{C}}
\newcommand{\CF}{\mathcal{F}}
\newcommand{\CP}{\mathcal{P}}
\newcommand{\CW}{\mathcal{W}}
\newcommand{\BA}{\mathbb{A}}
\newcommand{\BC}{\mathbb{C}}
\newcommand{\BF}{\mathbb{F}}
\newcommand{\BN}{\mathbb{N}}
\newcommand{\BP}{\mathbb{P}}
\newcommand{\BQ}{\mathbb{Q}}
\newcommand{\BZ}{\mathbb{Z}}
\begin{document}

\newcommand{\arXivNumber}{2007.04045}

\renewcommand{\thefootnote}{}

\renewcommand{\PaperNumber}{001}

\FirstPageHeading

\ShortArticleName{With Wronskian through the Looking Glass}

\ArticleName{With Wronskian through the Looking Glass\footnote{This paper is a~contribution to the Special Issue on Representation Theory and Integrable Systems in honor of Vitaly Tarasov on the 60th birthday and Alexander Varchenko on the 70th birthday. The full collection is available at \href{https://www.emis.de/journals/SIGMA/Tarasov-Varchenko.html}{https://www.emis.de/journals/SIGMA/Tarasov-Varchenko.html}}}

\Author{Vassily GORBOUNOV~$^{\dag\ddag}$ and Vadim SCHECHTMAN~$^\S$}

\AuthorNameForHeading{V.~Gorbounov and V.~Schechtman}

\Address{$^\dag$~HSE University, Russia}
\Address{$^\ddag$~Laboratory of Algebraic Geometry and Homological Algebra, \\
\hphantom{$^\ddag$}~Moscow Institute of Physics and Technology, Dolgoprudny, Russia}
\EmailD{\href{mailto:vgorb10@gmail.com}{vgorb10@gmail.com}}

\Address{$^\S$~Institut de Math\'ematiques de Toulouse, Universit\'e Paul Sabatier,\\
\hphantom{$^\S$}~118 Route de Narbonne, 31062 Toulouse, France}
\EmailD{\href{mailto:schechtman@math.ups-tlse.fr}{schechtman@math.ups-tlse.fr}}

\ArticleDates{Received September 01, 2020, in final form December 27, 2020; Published online January 02, 2021}

\Abstract{In the work of Mukhin and Varchenko from 2002 there was introduced a Wronskian map from the variety of full flags in a finite dimensional vector space into a product of projective spaces. We establish a precise relationship between this map and the Pl\"ucker map. This allows us to recover the result of Varchenko and Wright saying that the polynomials appearing in the image of the Wronsky map are the initial values of the tau-functions for the Kadomtsev--Petviashvili hierarchy.}

\Keywords{MKP hierarchies; critical points; tau-function; Wronskian}

\Classification{37K20; 81R10; 35C08}

\begin{flushright}
\begin{minipage}{75mm}
\it To Vitaly Tarasov and Alexander Varchenko,\\
as a token of friendship
\end{minipage}
\end{flushright}

\renewcommand{\thefootnote}{\arabic{footnote}}
\setcounter{footnote}{0}

\rightline{\includegraphics{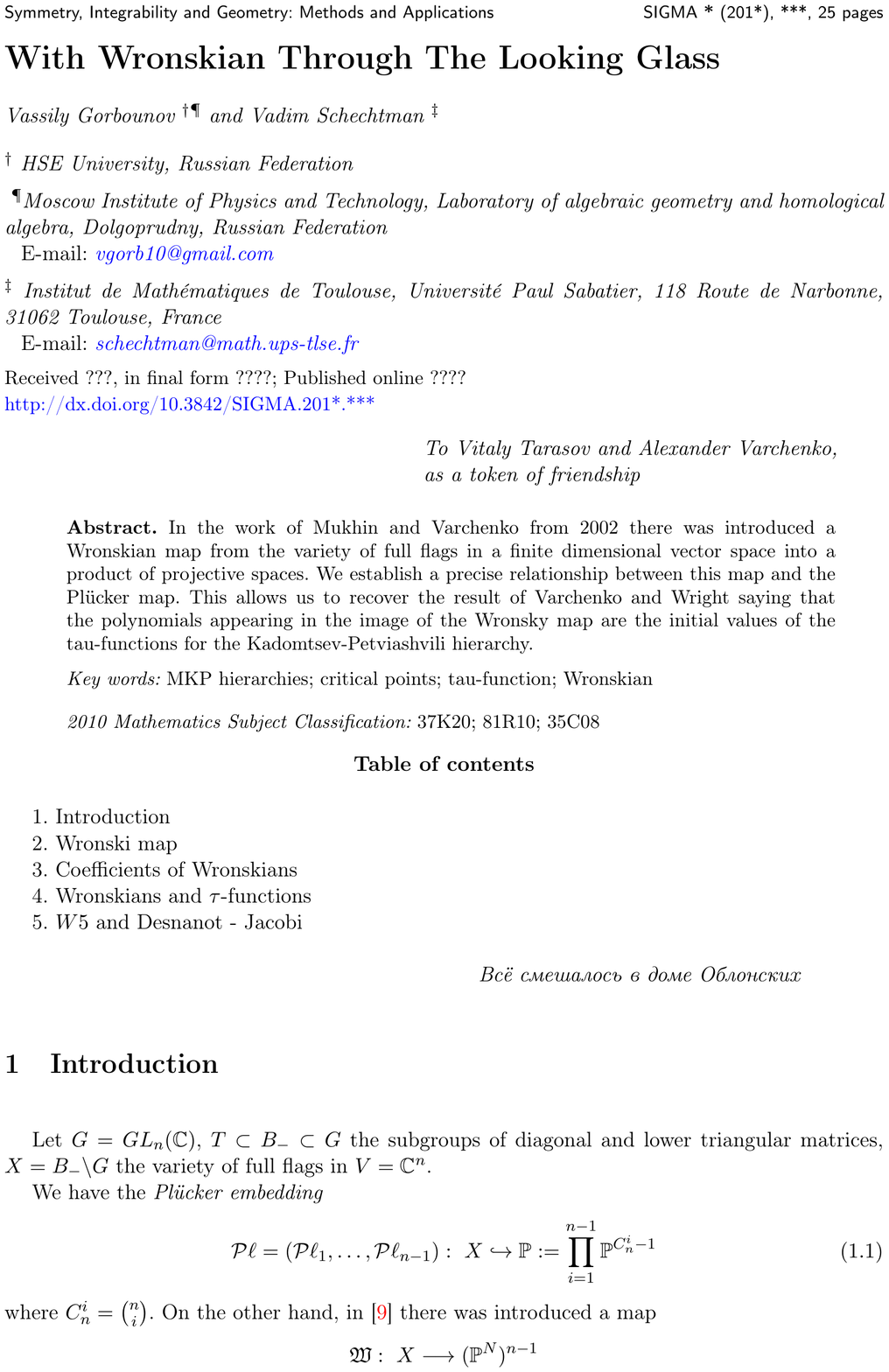}}

\section{Introduction}

Let $G = {\rm GL}_n(\BC)$, $T\subset B_- \subset G$ the subgroups of diagonal and lower triangular
matrices, $X = B_-\backslash G$ the variety of full flags in $V = \BC^n$.

We have the {\it Pl\"ucker embedding}
\begin{gather*}
\Pl = (\Pl_1, \dots, \Pl_{n-1})\colon \ X\hra \BP := \prod_{i=1}^{n-1} \BP^{C_n^i-1},
\end{gather*}
where $C_n^i = \binom{n}{i}$.
On the other hand, in \cite{MV} there was introduced a map
\[
\fW\colon \ X \lra \big(\BP^N\big)^{n-1}
\]
($N$ being big enough), which we call the {\em Wronskian map} since its definition uses a lot of Wronskians. This map has been studied in~\cite{SV}. We will see below that $\fW$ lands in a subspace
\[
\prod_{i=1}^{n-1} \BP^{i(n-i)-1}\subset \big(\BP^N\big)^{n-1},
\]
so we will consider it as a map
\begin{gather}
\fW = (\fW_1, \dots, \fW_{n-1})\colon\ X \lra \BP' := \prod_{i=1}^{n-1} \BP^{i(n-i)-1}.
\label{eq1.2}
\end{gather}

The present note, which may be regarded as a postscript to \cite{SV}, contains some elementary remarks on the relationship between $\Pl$ and $\fW$.

We define for each $1\leq i\leq n - 1$ a linear {\it contraction map}
\[
c_i\colon \ \BP^{C_n^i - 1} \lra \BP^{i(n-i) - 1},
\]
such that
\[
\fW_i = c_i\circ \Pl_i,
\]
see Theorem~\ref{theorem3.3}.

For $g\in G$ let $\bag\in X$ denote its image in $X$; let
\[
\fW_i(g) = (a_0(g):\dots : a_{i(n-i)}(g)),
\]
and consider a polynomial
\[
y_i(g)(x) = \sum_{q=0}^{i(n-i)-1} a_q(g)\frac{x^q}{q!}.
\]
As a corollary of Theorem~\ref{theorem3.3} we deduce that the polynomials $y_i(g)$ are nothing else but the initial values of the tau-functions for the KP hierarchy, see Theorems~\ref{theorem4.4.2} and~\ref{theorem4.5.2}; this assertion is essentially \cite[Lemma~5.7]{VW}.

As another remark we reinterpret in Section~\ref{section5} the {\it $W5$ identity} instrumental in \cite{SV} as a~particular case of the classical Desnanot--Jacobi formula, and explain its relation to Wronskian mutations
studied in~\cite{MV,SV}.
We note some interesting related references \cite{EG,G}.

\section{Wronsky map}\label{section2}

We fix a base commutative ring $\bk\supset \BQ$.
Let
\[
\ff = (f_1(t), \dots, f_n(t))
\]
be a sequence of rational functions $f_i(t)\in \bk(t)$. Its Wronskian matrix
is by definition an $n\times n$ matrix
\[
\CW(\ff) = \big(f_i^{(j)}(t)\big),
\]
where $f^{(j)}(t)$ denotes the $j$-th derivative.

The determinant of $\CW(\ff)$ is
the {\it Wronskian} of $\ff$:
\[
W(\ff) = \det(\CW(\ff)).
\]

If $A = (a_{ij})\in \fgl_n(\bk)$ is a scalar matrix,
\begin{gather*}
W(\ff A) = \det(A)W(\ff),\\
W(f_1, \dots, f_n)' = \sum_{i=1}^n W(f_1, \dots, f'_i, \dots, f_n).
\end{gather*}

\subsection[The map W]{The map $\boldsymbol{\fW}$}\label{section2.2}
Let
\[
M = (b_{ij})_{1\leq i\leq n, 1\leq j\leq m}\in \Mat_{n,m}(\bk)
\]
be a rectangular matrix. Let us associate to it a sequence of polynomials of degree
$m - 1$
\[
\fb(M) = (b_1(M,t),\dots, b_n(M,t)),\qquad b_i(t) = \sum_{j=0}^{m-1} b_{i,j+1}\frac{t^j}{j!}.
\]
In general we identify the space of polynomials of degree $\leq m - 1$ with the space
of $\bk$-points of an affine space:
\begin{gather*}
\bk[t]_{\leq m - 1} \iso \BA^{m+1}(\bk),\qquad \sum_{j=0}^{m-1} b_{j}\frac{t^j}{j!} \mapsto
(b_0, \dots, b_{m-1}).
\end{gather*}

For $1 \leq j\leq n$ let $\fb_{\leq j}(M)$ denote the truncated sequence
\[
\fb(M)_{\leq j} = (b_1(M,t),\dots, b_j(M,t)).
\]
We define a sequence of polynomials
\begin{gather*}
\fW(M) = (y_1(M),\dots , y_{n-1}(M)) :=
(W(\fb(M)_{\leq 1}),\dots , W(\fb(M)_{\leq n - 1}))\in \bk[t]^{n - 1}.
\end{gather*}
Note that if $n = m$ then
\[
W(\fb(M)) = \det M,
\]
it is a constant polynomial.

If $A \in \fgl_n(k)$ then
\[
\fb(AM) = \fb(M)A^t.
\]
It follows that if $e_{ij}(a)$, $i > j$, is a lower triangular elementary matrix then
\[
\fb(e_{ij}(a)M) = (b_1(M), \dots, b_j(M) + ab_i(M), b_{j+1}(M), \dots),
\]
whence
\[
\fW(e_{ij}(a)M)) = \fW(M).
\]
It follows that for any $A\in N_-(\bk)$ (a lower triangular with $1$'s on the diagonal)
\begin{gather}
\fW(AM) = \fW(M).
\label{eq2.2.2}
\end{gather}

On the other hand, if
\[
D = \diag(d_1, \dots, d_n)\in \fgl_n(\bk)
\]
then
\[
\fW(DM) = \prod_{i=1}^n d_i\cdot \fW(M).
\]
In other words, if $A\in B_-(n,\bk)$ (the lower Borel),{\samepage
\begin{gather}
\fW(AM) = \det(A)\fW(M),
\label{eq2.2.3}
\end{gather}
which of course is seen immediately.}

\subsection{Degrees and Bruhat decomposition}\label{section2.3}

Suppose that $n = m$. We will denote by $G = {\rm GL}_n$, $B_-\subset G$ the lower triangular Borel, $N_-\subset B_-$ etc.

For a matrix $g\in G(\bk)$ let $\fW(g) = (y_1(g), \dots , y_{n-1}(g))$,
and consider the vector of degrees
\[
d(g) = (d_1(g), \dots d_{n-1}(g)) = (\deg y_1(g), \dots , \deg y_{n-1}(g))\in \BN^{n-1}.
\]
It turns out that $d(g)$ can take only $n!$ possible values
situated in vertices of a permutohedron.

Namely, consider the Bruhat decomposition
\[
G(\bk) = \cup_{w\in W} B_-(\bk)wB_-(\bk),
\]
$W = S_{n-1} = W(G, T)$ being the Weyl group.

Identify $\BN^{n-1}$ with the root lattice $Q$ of $G^s := {\rm SL}_n$ using the standard base \mbox{$\{\alpha_1, \dots, \alpha_{n-1}\} \!\subset\! Q$} of simple roots.

Then it follows from \cite[Theorem 3.12]{MV}, that for $g\in B(w) := B_-(\bk)wB_-(\bk)$
\begin{gather*}
d(g) = w * \bno,
\end{gather*}
where $\bno = (0, \dots, 0)$ and $*$ denotes the usual shifted Weyl group action
\[
w*\alpha = w(\alpha - \rho) + \rho.
\]
In other words, if
\[
w * \bno = \sum_{i=1}^{n-1} d_i(w)\alpha_i
\]
then
\[
d_i(g) = d_i(w),\qquad 1\leq i\leq n - 1.
\]

\begin{Example}\label{example2.3.1}
Let $n = 3$. For $g = (a_{ij})\in {\rm GL}_3(\bk)$
\begin{gather*}
y_1(g) = a_{11} + a_{12}x + a_{13}\frac{x^2}{2},
\\
y_2(g) = \Delta_{11}(g) + \Delta_{13}(g)x + \Delta_{23}(g)\frac{x^2}{2}.
\end{gather*}
Here $\Delta_{ij}(g)$ denotes the $2\times 2$ minor of $g$ picking the first two rows
and $i$-th and $j$-th columns.

We have two simple roots $\alpha_1$, $\alpha_2$.

For $g\in {\rm GL}_3(\bk)$ the vector $d(g)$ can take $6$ possible values:
$(0,0)$, $(1,0)$, $(0,1)$, $(1,2)$, $(2,1)$, and $(2,2)$, these vectors forming a hexagon, cf.~\cite[Section~3.5]{MV}.

One checks directly that
\[
B_- = d^{-1}(0,0).
\]
This means that
$g\in B_-$ if and only if $a_{12} = a_{13} = \Delta_{23}(g) = 0$.

Similarly
\begin{gather*}
B_-(12)B_- = d^{-1}(1,0),\qquad B_-(23)B_- = d^{-1}(0,1),\qquad
B_-(123)B_- = d^{-1}(2,1),
\end{gather*}
i.e.,
$g\in B_-(123)B_-$ if and only if $\Delta_{23}(g)= 0$;
\begin{gather*}
 B_-(132)B_- = d^{-1}(2,1),
\qquad
B_-(13)B_- = d^{-1}(2,2)
\end{gather*}
(the big cell). This means that
$g\in B_-(13)B_-$ if and only if $a_{12} \neq 0$, $a_{13} \neq 0$, $\Delta_{23}(g)\neq 0$.

These formulas may be understood as a criterion for recognizing the Bruhat cells in ${\rm GL}_3$, cf.~\cite{FZ}.
\end{Example}

Therefore the Wronskian map induces maps
\begin{gather*}
\fW(w)\colon \ B(w) \lra \prod_{i=1}^{n-1} \BP^{d_i(w)}
\end{gather*}
for each $w\in W$.

\subsection{Induced map on the flag space}\label{section2.4}
Let $D\in \BN$ be such that $d_i(g) \leq D$ for all
$g\in G$, $i\in [n-1] := \{ 1, \dots, n-1\}$.

The invariance \eqref{eq2.2.2} implies that $\fW$ induces a map from the {\it base affine space}
\begin{gather*}
\fW_{\tFl_-}\colon \ \tFl_- := N_-\backslash G \lra \bk[t]_{\leq D}^{n-1} \isom \big(\BA^{D+1}\big)^{n-1}(\bk),
\end{gather*}
while \eqref{eq2.2.3} implies that $\fW$ induces a map from the full flag space
\begin{gather*}
\fW_{\Fl_-}\colon \ \Fl_- := B_-\backslash G \lra \BP(\bk[t])_{\leq D}^{n-1}\isom \big(\BP^{D}\big)^{n-1}(\bk).
\end{gather*}

More explicitly:
we can assign to an arbitrary matrix $g = (b_{ij})\in G$ a flag in $V = \bk^n$
\[
F(g) = V_1(g) \subset \dots \subset V_n(g) = V,
\]
whose $i$-th space $V_i(g)$ is spanned by the first $i$ row vectors of $g$
\[
v_j(g) = (b_{j1}, \dots , b_{jn})\in V,\qquad 1\leq j \leq i.
\]
It is clear that $F(g) = F(ng)$ for $n\in B_-$, and the map
\begin{gather*}
F\colon \ G \lra \Fl(V),
\end{gather*}
where $\Fl(V)$ is the space of full flags in $V$,
induces an isomorphism
\[
B_-\backslash G \iso \Fl(V).
\]
On the other hand consider the restriction of $F$ to the upper triangular group
\begin{gather*}
F_N\colon \ N \hra \Fl(V);
\end{gather*}
this map is injective and its image is the big Schubert cell.

We may also consider the composition
\begin{gather*}
\fW_N\colon \ N\hra \Fl_- \overset{\fW_{\Fl_-}}\lra \big(\BP^D\big)^{n-1}.
\end{gather*}
We will see below (cf.\ Section~\ref{section3.8}) that this map is an embedding.

\subsection{Partial flags}\label{section2.5}
More generally, for any unordered partition
\[
\lambda\colon \ n = n_1 + \dots + n_p,\qquad n_i\in \BZ_{\BZ > 0}
\]
we define in a similar way a map
\begin{gather*}
\fW\colon \ \Fl_{\lambda, -} := P_{\lambda,-}\backslash G \lra \big(\BP^D\big)^{p-1}.
\end{gather*}
For example, for $p = 2$ (Grassmanian case) corresponding to a partition
$\lambda = i + (n - i)$
\[
\fW\colon \ \Fl_{\lambda, -} = \Gr_n^i \lra \BP^D.
\]

\section{Coefficients of Wronskians}\label{section3}

\subsection{Pl\"ucker map}\label{section3.1}
\subsubsection{Schubert cells in a Grassmanian}\label{section3.1a}
For $i\in [n] : = \{1, \dots, n\}$ let $\CC_n^i$ denote the set of all $i$-element subsets of $[n]$.

The natural action of $W = S_n$ on $\CC_n^i$ identifies
\begin{gather*}
\CC_n^i \isom S_n/S_i.
\end{gather*}
Let $P_i\subset G = {\rm GL}_n$ denote the stabilizer of the coordinate subspace $\BA^i\subset \BA^n$, so that
\[
G/P_i \isom \Gr^i_n,
\]
the Grassmanian of $i$-planes in $\BA^n$.
The Bruhat lemma gives rise to an isomorphism
\begin{gather*}
\CC_n^i \isom B\backslash G/P_i.
\end{gather*}
This set may also be interpreted as ``the set of $\BF_1$-points''
\begin{gather*}
\CC_n^i = \Gr_n^i(\BF_1),
\end{gather*}
whose cardinality is a binomial coefficient
\[
|\CC_n^i| = C_n^i = \binom{n}{i}.
\]

\subsubsection{A Pl\"ucker map}\label{section3.1b} Consider a matrix
\[
M = (b_{ij})_{i\in [n], j\in [m]} \in \Mat_{n,m}(\bk)
\]
with $n\leq m$. For any $j\in [n]$ $M_{\leq j}$ will denote the truncated matrix
\[
M_{\leq j} = (b_{ip})_{i\in [j], p\in [m]} \in \Mat_{j,m}(\bk).
\]
We suppose that $\rank(M) = n$.

For any $j\in [n]$ consider the set of $j\times j$ minors of $M$
\[
p_j(M) = (\Delta_{[j], I})_{I\in \CC_m^j}\in \BA^{C_m^j}(\bk)
\]
or the same set up to a multiplication by a scalar
\[
\bap_j(M) = \pi(p_j(M)) = \BP^{C_m^j - 1}(\bk),
\]
where $\pi\colon \BA^{C_m^j}(\bk) \setminus \{\bno\} \lra \BP^{C_m^j - 1}(\bk)$ is the canonical projection.

We will use notations
\[
\tPl(M) = (p_1(M), \dots, p_m(M))\in \prod_{j=1}^m \BA^{C_m^j}(\bk)
\]
and
\[
\Pl(M) = (\bap_1(M), \dots, \bap_m(M))\in \prod_{j=1}^m \BP^{C_m^j - 1}(\bk).
\]
Suppose that $m = n$, so we get maps
\[
\tPl = (\Pl_1, \dots, \Pl_n)\colon \ {\rm GL}_n \lra \prod_{j=1}^n \BA^{C_n^j}
\]
and
\[
\Pl\colon \ {\rm GL}_n \lra \prod_{j=1}^n \BP^{C_n^j - 1}.
\]
It is clear that $\Pl(zM) = \Pl(M)$ for $z\in N_-\subset G = {\rm GL}_n$ and
$\Pl(bM) = \Pl(M)$ for $b\in B_-\subset G$, so $\tPl$, $\Pl$ induce maps
\begin{gather*}
\Pl_{\tFl}\colon \ \tFl_- = N_-\backslash G \lra \prod_{j=1}^n \BA^{C_n^j}
\end{gather*}
and
\begin{gather*}
\Pl_{\Fl}\colon \ \Fl_- = B_-\backslash G \lra \prod_{j=1}^n \BP^{C_n^j - 1}.
\end{gather*}

\subsection{Schubert cells in Grassmanians}\label{section3.2}
 For $i\in [n]$ consider the $i$-th Pl\"ucker map
\[
\Pl_i\colon \ G \lra \BA^{C_n^i}.
\]
We will compare it with the $i$-th component of the Wronskian map
\[
\fW_i\colon \ G \lra \BA^{d_i+1},\qquad g\mapsto y_i(g).
\]
To formulate the result we will use the Schubert decomposition from Section~\ref{section3.1a}.

Let $p = \ell(w)$ denote the length of a minimal
decomposition
\[
w = s_{j_1}\cdots s_{j_p}
\]
into a product of Coxeter generators $s_j = (j, j+1)$.

Let
\[
I_0 = I_\minn = \{1, \dots, i\}\in \CC_n^i.
\]
Sometimes it is convenient to depict elements of $\CC_n^i$ as sequences
\begin{gather}
I = (e_1\dots e_n),\qquad e_j \in \{0,1\},\qquad \sum e_j = i.
\label{eq3.2.1}
\end{gather}

In this notation
\[
I_0 = 1\dots 10\dots 0.
\]

We define a length map
\begin{gather}\label{eq3.2.1a}
\fl\colon \ \CC_n^i \lra \BZ_{\geq 0},
\end{gather}
as follows: identify $\CC_n^i\isom S_n/S_i$, then for $\bax\in \CC_n^i$ $\ell(\bax)$ is the minimal length of a representative $x\in S_n$.

\begin{Example}\label{example3.2.2} $\fl(I_0) = 0$. The element of maximal length is
\[
I_{\max} = 0\dots 01\dots 1.
\]
Its length is
\[
\fl(I_{\max}) = i(n-i).
\]
\end{Example}

\begin{Claim}\label{claim3.2.3}
Let $w_0\in S_n = W({\rm GL}_n)$ denote the element of maximal length. Then
\[
d_i := d(w_0) = \fl(I_{\max}) = i(n-i).
\]
\end{Claim}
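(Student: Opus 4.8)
The plan is to reduce everything to the degree formula already recorded in Section~\ref{section2.3}. The longest element $w_0$ indexes the big Bruhat cell $B(w_0) = B_-(\bk) w_0 B_-(\bk)$, and for $g \in B(w_0)$ that formula (extracted from \cite[Theorem 3.12]{MV}) asserts $d(g) = w_0 * \bno$, computed in the root lattice $Q$ and read off in the simple-root basis via $w_0 * \bno = \sum_i d_i(w_0)\alpha_i$. So the entire task becomes evaluating the shifted Weyl action $w_0 * \bno$ and extracting its $\alpha_i$-coefficient.

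First I would unwind the shifted action by its definition: $w_0 * \bno = w_0(\bno - \rho) + \rho = -\,w_0(\rho) + \rho$. The classical input here is that the longest element sends every positive root to a negative root, so $w_0(\rho) = -\rho$. Substituting yields $w_0 * \bno = 2\rho = \sum_{\alpha > 0} \alpha$, the sum of all positive roots of ${\rm SL}_n$.

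Next I would compute the coefficient of $\alpha_i$ in $2\rho$. Realizing the positive roots as $e_a - e_b$ with $1 \le a < b \le n$ and the simple roots as $\alpha_k = e_k - e_{k+1}$, the root $e_a - e_b$ equals $\alpha_a + \dots + \alpha_{b-1}$, hence contributes to $\alpha_i$ exactly when $a \le i < b$. Counting the admissible pairs, namely $a \in \{1, \dots, i\}$ and $b \in \{i+1, \dots, n\}$, gives $i(n-i)$. Under the identification $\BN^{n-1}\isom Q$ this coefficient is precisely the $i$-th entry $d_i(w_0)$, so $d_i(w_0) = i(n-i)$. The remaining equality $\fl(I_{\max}) = i(n-i)$ is already established in Example~\ref{example3.2.2}, so all three quantities agree.

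The step demanding the most care is not the arithmetic but the bookkeeping around the two conventions: one must make sure the shifted action is evaluated in the root lattice (not the weight lattice) and that its coordinate vector is matched against $w_0 * \bno = \sum_i d_i(w_0)\alpha_i$, together with the standard identity $w_0(\rho) = -\rho$. Once these are pinned down the positive-root count is entirely routine; as a sanity check, for $n = 3$ the recipe returns $(2,2)$, which is exactly the big-cell value appearing in Example~\ref{example2.3.1}.
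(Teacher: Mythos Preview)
Your argument is correct. You invoke the degree formula $d(g) = w * \bno$ from Section~\ref{section2.3}, compute $w_0 * \bno = \rho - w_0(\rho) = 2\rho$ using the standard fact $w_0(\rho) = -\rho$, and then read off the $\alpha_i$-coefficient of $2\rho = \sum_{\alpha>0}\alpha$ by counting positive roots $e_a - e_b$ with $a \le i < b$, obtaining $i(n-i)$. The $n=3$ sanity check against Example~\ref{example2.3.1} confirms the conventions line up.

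This is a genuinely different route from the paper's. The paper does not carry out any root-lattice computation here; it simply declares the claim to be a special case of Corollary~\ref{corollary3.5.2}, which is the forward-referenced balls-in-boxes formula $\fl(I) = \sum_j (a_j - j)$. Applied to $I_{\max} = \{n-i+1,\dots,n\}$ that formula yields $\fl(I_{\max}) = \sum_{j=1}^{i}(n-i) = i(n-i)$, and the connection to $d_i(w_0)$ is left implicit (ultimately it comes either from the very formula in Section~\ref{section2.3} that you exploit, or from Theorem~\ref{theorem3.3}). So your approach is more self-contained at this point in the exposition---it avoids the forward reference and directly establishes the equality $d_i(w_0) = i(n-i)$---while the paper's approach stays on the combinatorial side and folds the claim into the later length formula. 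Either way one still needs Example~\ref{example3.2.2} (or equivalently Corollary~\ref{corollary3.5.2}) for the $\fl(I_{\max})$ piece, which you cite.
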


This is a particular case of a more general statement, see below Corollary~\ref{corollary3.5.2}.

For each $j\geq 0$ consider the subset
\[
\CC_n^i(j) = \fl^{-1}(j) \subset \CC_n^i,
\]
so that
\[
\CC_n^i = \coprod_{j=0}^{d_i} \CC_n^i(j).
\]

For example
\[
\CC_n^i(0) = \{I_0\},\qquad \CC_n^i(d_i) = \{I_\maxx\}.
\]

\subsubsection{Symmetry}\label{section3.2.4}
\[
|\CC_n^i(j)| = |\CC_n^i(d_i - j)|.
\]

\subsubsection{Range}\label{section3.2.5}
$\CC_n^i(j)\neq \varnothing$ iff $0\leq j\leq d_i$. In other words, the range of $\fl$ is $\{0, \dots, d_i\}$.

The following statement is the main result of the present note.

\subsection[From Pl to W: a contraction]{From $\boldsymbol{\Pl}$ to $\boldsymbol{\fW}$: a contraction}\label{section3.3}

\begin{Theorem}\label{theorem3.3}
\begin{gather*}
y_i(g)(t) = \sum_{I\in \CC_n^i}\Delta_{[i],I}(g)m(I)\frac{t^{\fl(I)}}{\fl(I)!}
= \sum_{j=0}^{d_i} \biggl(\sum_{I\in \CC_n^i(j)} m(I)\Delta_{[i],I}(g)\biggr)\frac{t^j}{j!},
\end{gather*}
where the numbers $m(I)\in \BZ_{>0}$ are defined below, see Section~{\rm \ref{section3.5}}.

In other words, the map $\fl$ induces a contraction map
\begin{gather*}
c = c_i\colon \ \BA^{\CC_n^i} \lra \bk[t]_{\leq d_i},\\
c((a_I)_{I\in \CC_n^i}) = \sum_{j=0}^{d_i}\biggl(\sum_{I\in \fl^{-1}(j)} m(I)a_I\biggr)\frac{t^j}{j!}.
\end{gather*}
Then
\[
\fW_i = c_i\circ \Pl_i.
\]
\end{Theorem}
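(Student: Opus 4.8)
The plan is to realise the $i$-th Wronskian as a single $i\times i$ determinant of a product of two rectangular matrices, and then to apply the Cauchy--Binet formula. Write $B = (b_{k,l})_{1\le k\le i,\,1\le l\le n}$ for the top $i$ rows of $g$, viewed as an $i\times n$ matrix, and let $T(t)$ be the $n\times i$ matrix with entries $T(t)_{l,r} = t^{l-r}/(l-r)!$ for $l\ge r$ and $0$ otherwise. Since $b_k(t) = \sum_{l=1}^{n} b_{k,l}\,t^{l-1}/(l-1)!$, differentiating $r-1$ times gives $b_k^{(r-1)}(t) = \sum_l b_{k,l}\,T(t)_{l,r}$, so that the truncated Wronskian matrix factors as $\CW(\fb(g)_{\le i}) = B\,T(t)$ and hence $y_i(g) = \det\bigl(B\,T(t)\bigr)$.

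First I would apply Cauchy--Binet to this product:
\[
y_i(g) = \det\bigl(B\,T(t)\bigr) = \sum_{I\in\CC_n^i} \det\bigl(B_{[i],I}\bigr)\,\det\bigl(T(t)_{I,[i]}\bigr),
\]
where $B_{[i],I}$ is the submatrix of $B$ on the columns indexed by $I$ and $T(t)_{I,[i]}$ is the submatrix of $T(t)$ on the rows indexed by $I$. By definition $\det(B_{[i],I}) = \Delta_{[i],I}(g)$, the Pl\"ucker coordinate, so the entire problem is reduced to evaluating the monomial minors $\det(T(t)_{I,[i]})$.

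Next I would compute those minors explicitly. For $I = \{c_1<\dots<c_i\}$ set $d_a = c_a - 1$; then $T(t)_{I,[i]}$ is precisely the Wronskian matrix of the monomials $t^{d_1}/d_1!,\dots,t^{d_i}/d_i!$. Expanding the determinant as a sum over $S_i$, every term has $t$-degree $\sum_a d_a - \binom{i}{2} = \sum_a (c_a - a) = \fl(I)$, so the minor is a single monomial in $t$. The classical evaluation of a Wronskian of powers, $W(t^{d_1},\dots,t^{d_i}) = \prod_{a<b}(d_b - d_a)\,t^{\sum_a d_a - \binom{i}{2}}$, then gives
\[
\det\bigl(T(t)_{I,[i]}\bigr) = m(I)\,\frac{t^{\fl(I)}}{\fl(I)!},\qquad m(I) = \fl(I)!\,\frac{\prod_{a<b}(d_b-d_a)}{\prod_a d_a!}\in\BZ_{>0},
\]
which defines the integers $m(I)$ of Section~\ref{section3.5}. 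Substituting back yields the first displayed formula for $y_i(g)$; regrouping the sum by the common value $j=\fl(I)$ gives the second, and the reformulation as the linear contraction $c_i$ together with the identity $\fW_i = c_i\circ\Pl_i$ is then an immediate restatement.

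The main obstacle is this middle step: recognising the complementary minor $\det(T(t)_{I,[i]})$ as a monomial Wronskian and pinning down both its degree and its coefficient. The key point is that the $t$-degree is constant across all permutations in the determinant expansion (so that the minor is genuinely a monomial rather than a polynomial), together with the combinatorial identification $\sum_a(c_a-a)=\fl(I)$ of that degree with the length function. Once these are in place, everything else is bookkeeping with Cauchy--Binet and the Vandermonde evaluation.
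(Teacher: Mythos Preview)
Your argument is correct and is a genuinely different route from the paper's. The paper computes the Taylor coefficients of $y_i(g)$ by repeatedly differentiating: Lemma~\ref{lemma3.6.1} says that the derivative of a generalized Wronskian $W_I(\ff)$ is the sum $\sum_{j\in I^o} W_{\Delta_j I}(\ff)$, and iterating (Corollary~\ref{corollary3.6.2}) followed by evaluation at $t=0$ shows that $y_i^{(p)}(g)(0)$ is a sum of minors $\Delta_{[i],J}(g)$ over all composable sequences $(j_1,\dots,j_p)$ with $J=\Delta_{j_p}\cdots\Delta_{j_1}[i]$. This produces $m(I)$ \emph{directly} as the path count of Section~\ref{section3.5}. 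Your Cauchy--Binet factorisation $y_i(g)=\det\bigl(B\,T(t)\bigr)$ bypasses the inductive differentiation entirely and is more economical; it also yields, for free, the closed expression $m(I)=\fl(I)!\,\prod_{a<b}(d_b-d_a)\big/\prod_a d_a!$, which the paper only records later (in the guise of the ``hook lemma'', Lemma~\ref{lemma4.4.2.1}).

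There is one gap to close. The theorem as stated asserts the formula with the \emph{specific} integers $m(I)$ of Section~\ref{section3.5}, namely $m(I)=|\fm(I)|$, the number of sequences $(j_1,\dots,j_p)$ with $\Delta_{j_p}\cdots\Delta_{j_1}I_{\min}=I$. Your computation produces the coefficient $\fl(I)!\,\prod_{a<b}(d_b-d_a)/\prod_a d_a!$, and you assert without proof that this ``defines the integers $m(I)$ of Section~\ref{section3.5}''. That identification is not automatic: it is the Frame--Robinson--Thrall formula for the number of standard Young tableaux of the shape $\nu(I')$, equivalently the hook length formula. You should either cite this, or supply a short bijective or inductive argument (the induction on $|\nu|$ that the paper invokes for Lemma~\ref{lemma4.4.2.1} works). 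Once that link is made explicit, your proof is complete; note also that integrality and positivity of your closed form are then consequences of the combinatorial interpretation, rather than facts to be checked separately.
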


We will also use the reciprocal polynomials
\begin{gather}
\ty_i(g)(x) = x^{d_i}y_i(g)\big(x^{-1}\big).
\label{eq3.3.1}
\end{gather}

Proof of Theorem~\ref{theorem3.3} is given below 
after some preparation.

\begin{Examples}\label{examples3.4}\quad
\begin{enumerate}\itemsep=0pt
\item[(i)] Let $n = 4$, $i = 2$. A formula for $y_2(g)$, $g\in N_4$ is given in \cite[equation~(5.11)]{SV}:
\begin{gather}
y_2(g)(t) = \Delta_{12}(g) + \Delta_{13}(g)t + (\Delta_{14}(g) + \Delta_{23}(g))
\frac{t^2}{2} + 2\Delta_{24}(g)\frac{t^3}{6} + 2\Delta_{34}(g)\frac{t^4}{24},\!\!\!
\label{eq3.4.1}
\end{gather}
where for brevity
\[
\Delta_I := \Delta_{12,I}
\]
for $I\subset [4]$.

\item[(ii)] More generally, for $g\in {\rm GL}_n$
\begin{gather*}
y_2(g) = \Delta_{12}(g) + \Delta_{13}(g)t + \big(\Delta_{14}(g) + \Delta_{23}(g)\big) \frac{t^2}{2} \\
\hphantom{y_2(g) =}{}
+ \big(2\Delta_{24}(g) + \Delta_{15}(g)\big) \frac{t^3}{6} +
\big(2\Delta_{34}(g) + 2\Delta_{25}(g) + \Delta_{16}(g)\big)\frac{t^4}{24}+ \cdots,
\end{gather*}
$\deg y_2(g)\leq n(n-2)$, the exact degree depends on the Bruhat cell which $g$ belongs to.

\item[(iii)] Let again $n = 4$. Then (see \cite[equation~(5.11)]{SV})
\[
y_3(g) = \Delta_{123}(g) + \Delta_{124}(g)t + \Delta_{134}(g)\frac{t^2}{2} + \Delta_{234}(g)\frac{t^3}{6}.
\]
\end{enumerate}
\end{Examples}

\subsection[Creation operators Delta i]{Creation operators $\boldsymbol{\Delta_i}$}\label{section3.5} Fix $n\geq 2$.
For
\[
I = \{i_1, \dots, i_k\}\in \CC_n^k
\]
we imply that $i_1 < \dots < i_k$.

We call $i = i_p$ {\it admissible} if either $p = k$ and $i_p < n$ or
$i_{p+1} > i_p + 1$. We denote by $I^o\subset I$ the subset of admissible elements.

For each $i = i_p\in I^o$ we define a new set
\[
\Delta_iI = \{i'_1, \dots, i'_k\},
\]
where $i'_q = i_q$ if $q \neq p$, and $i'_p = i_p + 1$.

The reader should compare this definition with operators defining a representation of
the {\it nil-Temperley--Lieb algebra} from \cite[equation~(2.4.6)]{BFZ}, cf.\ also~\cite{BM} and references therein.

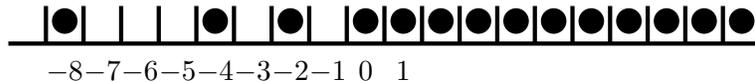
\begin{figure}[h!]\centering
\begin{tikzpicture}
\draw[ultra thick] (0,0) -- (10,0); 
\draw[ultra thick] (2.5,0) -- (2.5,0.5);
\foreach \x in {0.5,1,...,2} \draw[ultra thick] (\x,0) --
(\x,0.5);
\foreach \x in {3,3.5,...,10} \draw[ultra thick] (\x,0) -- (\x,0.5);
\fill[black] (0.75,0.3) circle (0.17cm);
\fill[black] (2.75,0.3) circle (0.17cm);
\fill[black] (3.75,0.3) circle (0.17cm);
\draw (0.75,-0.1)
node[anchor=north] {$-8$};
\draw (1.25,-0.1)
node[anchor=north] {$-7$};
\draw (1.75,-0.1)
node[anchor=north] {$-6$};
\draw (2.25,-0.1)
node[anchor=north] {$-5$};
\draw (2.75,-0.1)
node[anchor=north] {$-4$};
\draw (3.25,-0.1)
node[anchor=north] {$-3$};
\draw (3.75,-0.1)
node[anchor=north] {$-2$};
\draw (4.25,-0.1)
node[anchor=north] {$-1$};
\draw (4.75,-0.1)
node[anchor=north] {$0$};
\draw (5.25,-0.1)
node[anchor=north] {$1$};
\foreach \x in {5,5.5,...,10} \fill[black] (\x-0.25,0.3) circle (0.17cm);
\end{tikzpicture}
\caption{``Balls in boxes'' picture.}
\end{figure}

Recall the representation \eqref{eq3.2.1} of elements of $\CC_n^k$:
where we imagine the $1$'s as $k$ ``balls'' sitting in $n$ ``boxes''.
An operation $\Delta_i$ means moving
the ball in $i$-th box to the right, which is possible if the $(i+1)$-th box is free.

Each $I\in \CC_n^k$ may be written as{\samepage
\begin{gather}
I = \Delta_{j_p}\cdots \Delta_{j_1}I_\minn
\label{eq3.5.1}
\end{gather}
for some $j_1, \dots, j_p$. This is clear from the balls in boxes picture.}

Let $\fm(I)$ denote the set of all sequences
$j_1, \dots, j_p$ such that \eqref{eq3.5.1} holds, and
\[
m(I) := |\fm(I)|.
\]

\begin{Claim}\label{claim3.5.1}
The lengths $p$ of all sequences $(j_1, \dots, j_p)\in \fm(I)$ are the same, namely $p = \fl(I)$.
Here $\fl(I)$ is from \eqref{eq3.2.1a}.
\end{Claim}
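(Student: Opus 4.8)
The plan is to exhibit $\fl$ as a strict monovariant for the moves $\Delta_i$: each admissible move raises $\fl$ by exactly one, so that the number of moves in any factorization \eqref{eq3.5.1} is forced to equal $\fl(I)$, independently of the chosen sequence. First I would record the explicit combinatorial meaning of the length map. Writing $I = (e_1\cdots e_n)$ as in \eqref{eq3.2.1}, the minimal length of a representative of the coset corresponding to $I$ is the inversion statistic
\[
\fl(I) = \#\{(p,q)\colon p < q,\ e_p = 0,\ e_q = 1\},
\]
equivalently, for $I = \{i_1 < \dots < i_k\}\in\CC_n^k$,
\[
\fl(I) = \sum_{p=1}^k (i_p - p).
\]
This is the standard description of minimal-length coset representatives for a Grassmannian-type quotient, and it reproduces $\fl(I_\minn) = 0$ and $\fl(I_\maxx) = k(n-k)$ as in Example~\ref{example3.2.2}.

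Next I would verify the key increment property: if $i = i_p\in I^o$ is admissible and $I' = \Delta_i I$, then $\fl(I') = \fl(I) + 1$. From the closed formula $\fl(I) = \sum_p (i_p - p)$ this is immediate, since $\Delta_i$ replaces the entry $i_p$ by $i_p + 1$ and fixes all other entries, raising the sum by exactly one. In the balls-in-boxes picture the same fact reads as follows: admissibility of $i_p$ means box $i_p + 1$ is empty, so moving the ball turns the adjacent pattern $10$ at positions $(i_p, i_p + 1)$ into $01$; this creates precisely one new inverted pair $(p,q)$ with $e_p = 0$, $e_q = 1$, namely the pair $(i_p, i_p+1)$ itself, while the status of every other pair is unchanged.

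Finally I would conclude. Given any sequence $(j_1, \dots, j_p)\in\fm(I)$, the associated chain
\[
I_\minn,\ \Delta_{j_1}I_\minn,\ \dots,\ \Delta_{j_p}\cdots\Delta_{j_1}I_\minn = I
\]
starts at length $0$ and ends at length $\fl(I)$, and by the increment property $\fl$ rises by exactly $1$ at each step; hence the chain has exactly $\fl(I)$ steps, so $p = \fl(I)$ regardless of the sequence chosen. The only mild obstacle is to pin down the correct normalization of $\fl$, so that the moves $\Delta_i$ \emph{raise} it rather than lower it; once the inversion formula above is in hand, the monovariance is a one-line verification and the claim follows.
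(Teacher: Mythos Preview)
Your argument is correct and is precisely the unpacking of the paper's one-line proof ``Clear from balls in boxes picture'': each $\Delta_i$ shifts one ball one step to the right, so the total displacement $\sum_p (i_p - p)$ is a monovariant that increases by exactly $1$ per move, forcing all factorizations to have the same length. The only organizational difference is that you import the formula $\fl(I) = \sum_p (i_p - p)$ as a standard fact about Grassmannian coset representatives in order to identify that common length with $\fl(I)$, whereas the paper runs the logic the other way and records this formula afterwards as Corollary~\ref{corollary3.5.2}; since the formula is indeed well known independently, there is no circularity.
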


\begin{proof} Clear from balls in boxes picture.\end{proof}

\begin{Corollary}\label{corollary3.5.2}
Let
\[
I = \{a_1, \dots, a_k\},
\]
then
\[
\fl(I) = \sum_{i=1}^k (a_i - i).
\]
\end{Corollary}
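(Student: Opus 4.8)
The plan is to read off $\fl(I)$ from the balls-in-boxes model of Section~\ref{section3.5} together with Claim~\ref{claim3.5.1}. By Claim~\ref{claim3.5.1} every factorization $I=\Delta_{j_p}\cdots\Delta_{j_1}I_\minn$ has the same number of factors $p=\fl(I)$, so it suffices to compute $p$ for a single such factorization, that is, to count the elementary moves $\Delta_i$ needed to carry $I_\minn=I_0$ into $I$.

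To do this I would introduce the total-position functional $E\colon \CC_n^k\lra\BZ_{\geq0}$, $E(J)=\sum_{b\in J}b$, the sum of the positions of the $k$ balls. The crucial observation is that a single move $\Delta_i$ takes one ball from box $i$ to box $i+1$ and leaves the other balls fixed, hence increases $E$ by exactly $1$, independently of which admissible ball is moved. Consequently any sequence of $\Delta$'s transforming $I_0$ into $I$ consists of exactly $E(I)-E(I_0)$ moves.

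It then remains only to evaluate the two values of $E$. Writing $I=\{a_1,\dots,a_k\}$ with $a_1<\cdots<a_k$ we have $E(I)=\sum_{i=1}^k a_i$, while $I_0=\{1,\dots,k\}$ gives $E(I_0)=\sum_{i=1}^k i$; subtracting and invoking Claim~\ref{claim3.5.1} yields $\fl(I)=E(I)-E(I_0)=\sum_{i=1}^k(a_i-i)$, as claimed. The summands are automatically non-negative since $a_i\geq i$.

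I expect no serious obstacle here: the only points that deserve a word are that every $I$ is reachable from $I_0$ by rightward moves, already noted as clear from the picture around \eqref{eq3.5.1}, and that $E$ increments by exactly $1$ at each step, which is immediate from the definition of $\Delta_i$. As a cross-check one may instead work directly with the identification $\CC_n^k\isom S_n/S_k$: the minimal-length coset representative of $I$ is the Grassmannian permutation sending $1,\dots,k$ to $a_1,\dots,a_k$ in increasing order, and a direct inversion count recovers the same expression $\sum_{i=1}^k(a_i-i)$.
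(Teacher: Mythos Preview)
Your argument is correct and is precisely the elaboration the paper intends: Corollary~\ref{corollary3.5.2} is stated without proof as an immediate consequence of Claim~\ref{claim3.5.1} and the balls-in-boxes picture, and your position functional $E$ simply makes explicit the obvious fact that each $\Delta_i$ shifts one ball one box to the right, so the number of moves from $I_\minn$ to $I$ is $\sum a_i-\sum i$. The optional cross-check via minimal coset representatives is a nice addition but not needed.
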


To put it differently, define a graph $\Gamma_n^k$ whose set of vertices is $\CC_n^k$, the edges having the form
\[
I \lra \Delta_iI
\]
(or otherwise define an obvious partial order on $\CC_n^i$). Then $\fm(I)$ is the set
of paths in $\Gamma_n^k$ going from the minimal element $[k]$ to $I$.

\subsubsection{Symmetry}\label{section3.5.3}
{\it This graph can be turned upside down.}
Clear from the ``balls in boxes'' description.

\subsection{Generalized Wronskians and the derivative}\label{section3.6} Let
\[
\ff = (f_1(t), f_2(t), \dots )
\]
be a sequence of functions. We can assign to it a $\BZ_{\geq 1}\times \BZ_{\geq 1}$ Wronskian matrix
\[
\CW(\ff) = \big(f_i^{(j-1)}\big)_{i, j\geq 1}.
\]
For each $I = \{i_1, \dots, i_k\} \in \CC_\infty^k$ let $\CW_I(\ff)$ denote a $(k\times k)$-minor of
$\CW(\ff)$ with rows $i_1, \dots, i_k$ and columns $1, 2, \dots, k$, and let
\[
W_{I}(\ff) = \det \CW_I(\ff).
\]

\begin{Lemma}\label{lemma3.6.1} The derivative
\[
W_I(\ff)' = \sum_{i\in I^o} W_{\Delta_iI}(\ff).
\]
\end{Lemma}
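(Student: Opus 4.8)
The plan is to differentiate the determinant $W_I(\ff)=\det\CW_I(\ff)$ one row at a time, exactly as in the finite product rule $W(f_1,\dots,f_n)'=\sum_i W(f_1,\dots,f_i',\dots,f_n)$ recalled in Section~\ref{section2}, and then to identify each surviving summand with some $W_{\Delta_iI}(\ff)$. Writing $r_p=\big(f_{i_p},f_{i_p}',\dots,f_{i_p}^{(k-1)}\big)$ for the $p$-th row of $\CW_I(\ff)$, multilinearity of the determinant in its rows gives
\[
W_I(\ff)'=\sum_{p=1}^k\det\big(r_1,\dots,r_{p-1},\,r_p',\,r_{p+1},\dots,r_k\big),
\]
so everything reduces to analysing the single-row-differentiated determinants $D_p:=\det(\dots,r_p',\dots)$.

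The key step is the structural observation that differentiating a row of the Wronskian matrix shifts its index. Indeed $r_p'=\big(f_{i_p}',\dots,f_{i_p}^{(k)}\big)$, and since consecutive rows of $\CW(\ff)$ are successive derivatives (so that the entry $f_{i_p}^{(j+1)}$ is exactly the corresponding entry $f_{i_p+1}^{(j)}$ of the $(i_p+1)$-st row), the vector $r_p'$ is precisely the $p$-th row of $\CW_{\Delta_{i_p}I}(\ff)$. Hence $D_p$ is the determinant of the matrix obtained from $\CW_I(\ff)$ by replacing the index $i_p$ with $i_p+1$ and leaving the other rows untouched; this is what converts the analytic operation $\dpar_t$ into the combinatorial ball-moving operator $\Delta_i$.

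It then remains to match the range of summation with $I^o$. If $p<k$ and $i_p$ is \emph{not} admissible, i.e.\ $i_{p+1}=i_p+1$, then after differentiation $r_p'$ coincides with the next row $r_{p+1}$ (both now carrying the index $i_p+1$), so $D_p=0$ and the term drops out; this is exactly the ``balls in boxes'' constraint that a ball cannot move right into an occupied box. If $i_p$ is admissible — automatically so for $p=k$, since $i_k+1$ never lies in $I$ — then $\Delta_{i_p}I$ is again a strictly increasing sequence and $D_p=W_{\Delta_{i_p}I}(\ff)$. Summing over the admissible indices yields $W_I(\ff)'=\sum_{i\in I^o}W_{\Delta_iI}(\ff)$, as claimed.

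The step I would be most careful about is the index-shift identification $r_p'=$ ($p$-th row of $\CW_{\Delta_{i_p}I}$): this is where the Wronskian structure (each row the derivative of the previous, equivalently $f_{i}^{(j+1)}=f_{i+1}^{(j)}$) is genuinely used, and without it the reduction of $\dpar_t$ to $\Delta_i$ breaks down. Everything else is the bookkeeping of which of the $k$ multilinearity terms survive, which is controlled precisely by admissibility and is consistent with Claim~\ref{claim3.5.1}.
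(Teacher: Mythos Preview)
Your overall plan is the right one: expand $W_I(\ff)'$ by the Leibniz rule for determinants and identify which of the $k$ summands survive. The gap is in the identification step. You claim that differentiating the row $r_p=(f_{i_p},f_{i_p}',\dots,f_{i_p}^{(k-1)})$ produces the row of $\CW(\ff)$ with index $i_p+1$, on the grounds that ``$f_{i_p}^{(j+1)}$ is exactly the corresponding entry $f_{i_p+1}^{(j)}$''. This is false: $f_{i_p}$ and $f_{i_p+1}$ are unrelated members of the sequence $\ff$, and nothing in the hypotheses forces $f_{i+1}=f_i'$. Already for $k=1$ and $I=\{1\}$ your argument would prove $f_1'=f_2$. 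So the bridge from $\dpar_t$ to the combinatorial operator $\Delta_i$ collapses precisely at the step you flagged, if $I$ is read as a set of row indices.

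What does hold is that consecutive \emph{columns} of $\CW(\ff)=(f_i^{(j-1)})$ are successive derivatives of one another: the $j$-th column $(f_1^{(j-1)},f_2^{(j-1)},\dots)^t$ differentiates to the $(j+1)$-st. Hence $I=\{i_1,\dots,i_k\}$ must be read as a set of column indices (orders of derivative); the paper's phrase ``rows $i_1,\dots,i_k$ and columns $1,\dots,k$'' is evidently a slip, as the application in the proof of Theorem~\ref{theorem3.3} confirms --- there the operators $\Delta_i$ act on the \emph{column} set $J$ in the minors $\Delta_{[i],J}(g)$, not on the row set. With that reading your argument goes through verbatim after interchanging ``row'' and ``column'': differentiating the $q$-th column of $\CW_I(\ff)$ shifts its index from $i_q$ to $i_q+1$; if $i_{q+1}=i_q+1$ two columns coincide and the term vanishes; otherwise it is exactly $W_{\Delta_{i_q}I}(\ff)$.
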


\begin{Corollary}\label{corollary3.6.2}
\[
W_I(\ff)^{(p)} = \sum_{(i_1,\dots, i_p)\ \text{composable}}W_{\Delta_{i_p}\cdots\Delta_{i_1}I}(\ff),
\]
where a sequence $(i_1,\dots, i_p)$ is called composable if
$i_q\in (\Delta_{i_{q-1}}\cdots\Delta_{i_1}I)^o$ for all $q$.
\end{Corollary}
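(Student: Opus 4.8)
The plan is to prove the formula by induction on $p$, with Lemma~\ref{lemma3.6.1} serving both as the base case and as the engine of the inductive step. For $p = 1$ the claimed identity is precisely Lemma~\ref{lemma3.6.1}, since a length-one sequence $(i_1)$ is composable exactly when $i_1 \in I^o$ (the empty product of $\Delta$'s applied to $I$ being $I$ itself).

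For the inductive step I would assume the stated formula for the $p$-th derivative and differentiate once more. Writing $W_I(\ff)^{(p+1)} = \big(W_I(\ff)^{(p)}\big)'$ and inserting the inductive hypothesis produces a sum over composable $(i_1,\dots,i_p)$ of the derivatives $W_{\Delta_{i_p}\cdots\Delta_{i_1}I}(\ff)'$. To each such term I would apply Lemma~\ref{lemma3.6.1} with the set $J := \Delta_{i_p}\cdots\Delta_{i_1}I$ playing the role of $I$, which yields $\sum_{i_{p+1}\in J^o} W_{\Delta_{i_{p+1}}J}(\ff)$.

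The crux is then purely combinatorial: the resulting double sum reorganizes into a single sum over composable sequences of length $p+1$. By the very definition of composability, appending an index $i_{p+1}$ to a composable sequence $(i_1,\dots,i_p)$ gives a composable sequence $(i_1,\dots,i_{p+1})$ if and only if $i_{p+1} \in \big(\Delta_{i_p}\cdots\Delta_{i_1}I\big)^o = J^o$, which is exactly the range of the inner summation. Hence every composable $(p+1)$-sequence arises exactly once, and since $\Delta_{i_{p+1}}J = \Delta_{i_{p+1}}\cdots\Delta_{i_1}I$, one obtains the claimed identity for $p+1$, completing the induction.

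I expect no real obstacle here beyond bookkeeping. The only point that needs care is that composability is defined \emph{recursively}, so the set of composable $(p+1)$-sequences is in canonical bijection with pairs consisting of a composable $p$-sequence $(i_1,\dots,i_p)$ together with an admissible extension $i_{p+1}\in J^o$. Once this recursive structure is made explicit, the induction closes immediately, and no cancellation or sign issues intervene because each $W_{\Delta_i I}$ enters Lemma~\ref{lemma3.6.1} with coefficient $+1$.
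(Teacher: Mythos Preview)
Your proposal is correct and is exactly the argument implicit in the paper: Corollary~\ref{corollary3.6.2} is stated without proof as an immediate consequence of Lemma~\ref{lemma3.6.1}, and the straightforward induction on $p$ you describe is precisely how one unpacks that implication. There is nothing to add or correct.
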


\begin{proof}[Proof of Theorem~\ref{theorem3.3}]
We shall use a formula:
if
\[
y(t) = \sum_{i\geq 0} a_i\frac{t^i}{i!}
\]
then
\[
a_i = y^{(i)}(0).
\]
Let $g = (b_{ij})\in {\rm GL}_n$,
\[
b_i(t) = \sum_{j=0}^{n-1} b_{i,j+1}\frac{t^j}{j!},\qquad 1\leq i\leq n.
\]
By definition
\[
y_i(g)(t) = W(b_1(t), \dots, b_i(t)),\qquad 1\leq i\leq n.
\]
For $I, J\subset [n]$ let $M_{IJ}(g)$ denote the submatrix of $g$ lying on the intersection
of the lines (columns) with numbers $i\in I$ ($j\in J)$, so that
\[
\Delta_{IJ}(g) = \det M_{IJ}(g).
\]
We see that the constant term
\[
y_i(g)(0) = \det \big(M_{[i],[i]}(g)^t\big) = \Delta_{[i]}(g),
\]
where $M^t$ denotes the transposed matrix.

To compute the other coefficients
we use Lemma~\ref{lemma3.6.1} and Corollary~\ref{corollary3.6.2}. So
\[
y'_i(0) = \det\big(M_{[i],\Delta_i[i]}(g)^t\big) = \Delta_{[i],\Delta_i[i]}(g),
\]
and more generally
\begin{gather*}
y^{(p)}_i(0) = \sum_{(i_1, \dots, i_p)\ \text{composing}}\det\big(
M_{[i],\Delta_{i_p}\cdots \Delta_{i_1}[i]}(g)^t\big)\\
\hphantom{y^{(p)}_i(0)}{}
= \sum_{(i_1, \dots, i_p)\ \text{composing}} \Delta_{[i],\Delta_{i_p}\cdots \Delta_{i_1}[i]}(g),
\end{gather*}
which implies the formula.
\end{proof}

\subsection{Triangular theorem}\label{section3.8}
Consider an upper triangular unipotent matrix
$g\in N\subset {\rm GL}_n(\bk)$.
We claim that $g$ may be reconstructed uniquely from the coefficients of
polynomials $y_1(g),\dots, y_{n-1}(g)$.

More precisely, to get the first $i$ rows of $g$ we need only a truncated part of the first
$i$ polynomials
\[
(y_1(g) = y_1(g)_{\leq n-1}, y_2(g)_{\leq n-2},\dots, y_i(g)_{\leq i}).
\]
This is the contents of \cite[Theorem~5.3]{SV}. We explain how it follows from our Theorem~\ref{theorem3.3}.

To illustrate what is going on consider an example $n = 5$. Let{\samepage
\[
g = \left(\begin{matrix} 1 & a_1 & a_2 & a_3 & a_4 \\
0 & 1 & b_2 & b_3 & b_4\\
0 & 0 & 1 & c_3 & c_4\\
0 & 0 & 0 & 1 & d_4\\
0 & 0 & 0 & 0 & 1
\end{matrix}\right).
\]
We have $y_1(g) = b_1(g)$, so we get the first row of $g$, i.e., the elements $a_i$, from $y_1(g)$.}

Next,
\begin{gather*}
y_2(g) = \Delta_{12}(g) + \Delta_{13}(g)x + (\Delta_{14}(g) +
\Delta_{23}(g))\frac{x^2}{2} + (\Delta_{15}(g) + \cdots )\frac{x^3}{6} + \cdots \\
\hphantom{y_2(g)}{}= 1 + b_2x + (b_3 + a_1b_2 - a_2)\frac{x^2}{2} + (b_4 + \cdots )\frac{x^6}{6} +
\cdots,
\end{gather*}
whence we recover $b_2$, $b_3$, $b_4$ (in this order) from $y_2(g)$, the numbers $a_i$ being already known.

Next,
\begin{gather*}
y_3(g) = \Delta_{123}(g) + \Delta_{124}(g)x + (\Delta_{134}(g) +\Delta_{125}(g))\frac{x^2}{2} + \cdots \\
\hphantom{y_3(g)}{} = 1 + c_3x + (c_4 + b_2c_3 - b_3)\frac{x^2}{2} + \cdots,
\end{gather*}
whence we recover $c_3$, $c_4$ (in this order) from $y_3(g)$.

Finally
\[
y_4(g) = \Delta_{1234}(g) + \Delta_{1235}(g)x + \cdots = 1 + d_4x + \cdots,
\]
whence $d_4$ from $y_4(g)$.

{\it Triangular structure on the map $\fW_N$.}
We can express the above as follows. Let $\CB := \fW(N)$, so that
\[
\fW_N\colon \ N\lra \CB.
\]
Obviously $N\isom \bk^{n(n-1)/2}$;
we define $n(n-1)/2$ coordinates in $N$ as the elements of a matrix $g\in N$ in the lexicographic order, i.e., $n-1$ elements from the first row (from left to right), $n-2$ elements from the second row, etc.

Let
\[
\fW(g) = (y_1(g), \dots, y_{n-1}(g));
\]
we define the coordinates of a vector $\fW(g)$ similarly, by taking $n-1$ coefficients
of $y_1(g)$, then the first $n-2$ coefficients of $y_2(g)$, etc.

\begin{Claim}\label{claim3.8.1}
The above rule defines a global coordinate system on
$\CB$, i.e., an isomorphism
\[
\CB\isom \bk^{n(n-1)/2},
\]
and the matrix of $\fW_N$ with respect to the above two lexicographic coordinate systems
is triangular with $1$'s on the diagonal.
\end{Claim}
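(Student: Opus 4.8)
The plan is to make the ``unitriangular'' statement precise and then reduce it to a single combinatorial–linear-algebra computation governed by Theorem~\ref{theorem3.3}. First I would fix the order-preserving dictionary between the two coordinate systems: the source coordinate $b_{i,i+k}$ (the entry of $g\in N$ in row $i$, column $i+k$, for $1\le i\le n-1$ and $1\le k\le n-i$, so that $i+k\le n$) is matched with the target coordinate ``coefficient of $t^k/k!$ in $y_i(g)$''. The lexicographic order on the $b_{i,i+k}$ (rows top to bottom, within a row left to right) agrees with the lexicographic order on the chosen coefficients ($y_1$ before $y_2$ before $\cdots$, and within $y_i$ in order of increasing $k$), and both index sets have cardinality $\sum_{i=1}^{n-1}(n-i)=n(n-1)/2$. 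With this dictionary the claim becomes: the coefficient of $t^k/k!$ in $y_i(g)$ equals $b_{i,i+k}$ plus a polynomial in the entries of $g$ that strictly precede $b_{i,i+k}$.

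By Theorem~\ref{theorem3.3} this coefficient is exactly $\sum_{I\in\CC_n^i(k)}m(I)\Delta_{[i],I}(g)$, so everything reduces to analysing the sets $\CC_n^i(k)$ and the minors $\Delta_{[i],I}(g)$ for $g$ upper unitriangular. The key combinatorial step is to isolate the ``diagonal'' index $I_k:=\{1,\dots,i-1,i+k\}$. By Corollary~\ref{corollary3.5.2}, for $I=\{a_1<\dots<a_i\}$ one has $\fl(I)=\sum_{q=1}^i(a_q-q)$ with all summands $\ge 0$ (since the $a_q$ are strictly increasing positive integers, $a_q\ge q$); hence $\fl(I)=k$ forces $a_i-i\le k$, i.e.\ $a_i\le i+k$, with equality iff $a_q=q$ for all $q<i$, that is iff $I=I_k$. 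Thus $I_k$ is the unique element of $\CC_n^i(k)$ whose largest entry attains $i+k$, and reading off the balls-in-boxes picture, the only way to reach $I_k$ from $I_\minn=[i]$ is to march the last ball straight from box $i$ to box $i+k$, so $m(I_k)=1$.

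Next I would compute the minors for $g\in N$. Since $g$ is upper unitriangular, the submatrix for $\Delta_{[i],I_k}(g)$ has last row $(0,\dots,0,b_{i,i+k})$ in the columns $1,\dots,i-1,i+k$; expanding along this row against the top-left unitriangular $(i-1)\times(i-1)$ block gives $\Delta_{[i],I_k}(g)=b_{i,i+k}$ exactly. For every other $I\in\CC_n^i(k)$ we have $a_i\le i+k-1$, so the row-$i$ entries entering $\Delta_{[i],I}(g)$ lie among $b_{i,i},\dots,b_{i,i+k-1}$ (those with column $<i$ vanish), each of which either equals $1$ or strictly precedes $b_{i,i+k}$; together with the rows $1,\dots,i-1$, which precede row $i$ entirely, this shows $\Delta_{[i],I}(g)$ is a polynomial in entries strictly before $b_{i,i+k}$. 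Combined with $m(I_k)=1$ this yields the triangular formula.

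Finally, the triangular formula exhibits $\fW_N$, read in the two coordinate systems, as a polynomial endomorphism of $\bk^{n(n-1)/2}$ whose ``matrix'' is unitriangular, i.e.\ a de Jonqui\`eres (triangular) automorphism. Such a map is invertible by solving for the source coordinates one at a time, hence is an isomorphism of affine spaces; in particular the projection $\CB\to\bk^{n(n-1)/2}$ onto the chosen coefficients is bijective, giving the global coordinate system $\CB\isom\bk^{n(n-1)/2}$ and the unitriangularity claim simultaneously. I expect the main obstacle to be the combinatorial step of the second paragraph---pinning down $I_k$ as the unique index with maximal last entry and checking $m(I_k)=1$---since the minor expansion and the passage from a triangular automorphism to an isomorphism are routine once that is in hand.
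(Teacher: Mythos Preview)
Your proposal is correct and follows the same strategy the paper indicates: deduce the triangular structure from Theorem~\ref{theorem3.3} by isolating, for each coefficient of $t^k/k!$ in $y_i(g)$, the unique index $I_k=\{1,\dots,i-1,i+k\}\in\CC_n^i(k)$ whose minor equals the new variable $b_{i,i+k}$. The paper only illustrates this via the $n=5$ example and leaves the general claim to the reader; your write-up supplies precisely the missing verification (uniqueness of $I_k$ via Corollary~\ref{corollary3.5.2}, $m(I_k)=1$, the minor computation, and the de Jonqui\`eres inversion), so nothing further is needed.
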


\begin{Corollary}\label{corollary3.8.2}
The map $\fW$ \eqref{eq1.2} is an embedding.
\end{Corollary}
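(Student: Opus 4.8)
The plan is to use that $X=B_-\backslash G\cong\Fl(V)$ is smooth and proper over $\bk$ and that $\fW$ maps it to the projective variety $\BP'$, so that it suffices to check $\fW$ is injective on points and unramified (injective on tangent spaces): a proper morphism that is injective and unramified is a closed immersion. First I would record that $\fW$ is a genuine morphism, defined at every $\bar g$. Indeed the first $i$ rows of $g$ are linearly independent, hence so are the polynomials $b_1(g),\dots,b_i(g)$, and since $\bk\supset\BQ$ their Wronskian $y_i(g)$ is then a nonzero polynomial, giving a well-defined point in the $i$-th factor.

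Next I would separate the Bruhat cells using degrees. The degree $\deg y_i(\bar g)$ is unchanged by rescaling, so the whole degree vector $d(\bar g)=(\deg y_1,\dots,\deg y_{n-1})$ is recoverable from $\fW(\bar g)\in\BP'$. By Section~\ref{section2.3} one has $d(\bar g)=w*\bno=\rho-w(\rho)$ for $\bar g\in B(w)$, and since $\rho$ is regular its stabilizer in $W$ is trivial, so $w\mapsto w*\bno$ is injective. Hence $\fW$ attaches distinct degree vectors to distinct cells, so any two points with the same image lie in a common cell, and it is enough to treat each cell separately. On the top cell $B(w_0)\cong N$, Claim~\ref{claim3.8.1} gives an isomorphism $\fW_N\colon N\iso\CB$ with unipotent triangular matrix, so $\fW$ is injective there and its differential is everywhere an isomorphism on the big cell.

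It then remains to handle the smaller cells $B(w)$. For injectivity I would reconstruct a flag in $B(w)$ from $\fW$ by the row-by-row solve of Section~\ref{section3.8}: having read $w$ and its pivot pattern off the degree vector, Theorem~\ref{theorem3.3} together with the balls-in-boxes multiplicities should exhibit each free coordinate of the echelon representative of the cell as the leading new contribution to a definite coefficient of some $y_i$, the previously found coordinates entering only through lower-order terms, which yields injectivity of $\fW|_{C_w}$. For immersivity, since $\Pl$ is a closed embedding and $\fW_i=c_i\circ\Pl_i$, the condition at a point $p$ is that the linear contraction $c=\prod_i c_i$ be injective on the tangent space $T_{\Pl(p)}\Pl(X)$; on the big cell this holds by Claim~\ref{claim3.8.1}. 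The main obstacle is the boundary, on two counts: the pivot minor $\Delta_{[i],I_{\max}}$ anchoring the $w_0$ computation vanishes off the big cell, so the pivots and the order of recovery must be re-chosen from the shape of the cell; and immersivity now requires controlling $c$ in the directions transverse to $C_w$, not merely along it. A tempting shortcut is the looking-glass symmetry — the reciprocal polynomials $\ty_i$ of~\eqref{eq3.3.1} and the upside-down symmetry of the graph $\Gamma_n^k$ (Section~\ref{section3.5.3}) — transporting the big-cell statement to the opposite cells and interpolating; making either route uniform in $w$ is where the genuine work lies.
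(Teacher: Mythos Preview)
Your approach diverges from the paper's and leaves the key step undone. The paper's argument is three lines: $X$ is covered by open translates of the big cell $N$, the map $\fW$ is ${\rm GL}_n$-equivariant, and by Claim~\ref{claim3.8.1} it is an embedding on $N$; equivariance then transports the embedding to every chart $N\cdot g$, and since these open charts cover $X$ (with any two points lying in a common one) the map is a global embedding. There is no Bruhat stratification, no degree-vector bookkeeping, no separate treatment of boundary strata.

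Your route --- separate the Bruhat cells $B(w)$ via the degree vector $d(\bar g)=\rho-w\rho$ (a valid and pleasant observation, since $\rho$ has trivial $W$-stabilizer), then attack each cell by hand --- is sound in outline but, as you yourself flag, is not carried through where it matters. On a cell $B(w)$ with $w\neq w_0$ you have neither executed the adapted triangular reconstruction for injectivity along $B(w)$ nor controlled the contraction $c=\prod_i c_i$ on the tangent directions \emph{transverse} to $B(w)$ for immersivity; both are left as ``where the genuine work lies''. The looking-glass symmetry you reach for is only an involution and cannot interpolate to the intermediate cells. The idea you are missing is exactly the one the paper uses: the transitive ${\rm GL}_n$-action on $X$, together with equivariance of $\fW$, lets one move \emph{every} point into the single chart $N$, so Claim~\ref{claim3.8.1} does all the work at once and the cell-by-cell analysis never arises.
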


Indeed, $X$ is a union of open Schubert cells, the map $\fW$ is ${\rm GL}_n$-equivariant,
and each open cell may be transfered to $N$ using an appropriate $g\in {\rm GL}_n$.\footnote{We owe this remark to
A.~Kuznetsov.}

\section{Wronskians and tau-functions}\label{section4}

\subsection{Minors of the unit Wronskian}\label{section4.1}
Consider a $n\times n$ Wronskian matrix
\begin{gather*}
\CW_n(x) = \CW\big(1, x, x^2/2, \dots, x^{n-1}/(n-1)!\big).
\end{gather*}

\begin{Claim}\label{claim4.1.1}
For each $1\leq i\leq n$ and $I\in \CC^i_n$ we have
\[
\Delta_{[i], I}(\CW_n(x)) = \frac{x^{\fl(I)}}{n(I)}
\]
for some $n(I)\in \BZ_{> 0}$.
\end{Claim}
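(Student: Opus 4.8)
The plan is to compute the minor outright and read off that it is a single monomial whose coefficient is the reciprocal of a positive integer. First I would record the entries of the submatrix. The $b$-th column of $M_{[i],I}$ comes from the $i_b$-th function $x^{i_b-1}/(i_b-1)!$, and its $a$-th entry is the $(a-1)$-st derivative of that function, namely $x^{i_b-a}/(i_b-a)!$ (and $0$ once $a>i_b$). Equivalently $\CW_n(x)=\exp(xE)$, where $E$ has $1$'s on the first superdiagonal and $0$'s elsewhere, so $\CW_n(x)$ is upper unitriangular and
\[
\Delta_{[i],I}(\CW_n(x))=\det\big(x^{i_b-a}/(i_b-a)!\big)_{1\le a,b\le i}.
\]

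Next I would prove homogeneity. In the Leibniz expansion of this determinant every nonzero term is $\pm\prod_a x^{\,i_{\sigma(a)}-a}/(i_{\sigma(a)}-a)!$, of $x$-degree $\sum_a\big(i_{\sigma(a)}-a\big)=\sum_b i_b-\binom{i+1}{2}$, which by Corollary~\ref{corollary3.5.2} equals $\fl(I)$ independently of $\sigma$. Hence the minor is homogeneous of degree $\fl(I)$, so $\Delta_{[i],I}(\CW_n(x))=c_I\,x^{\fl(I)}$ for a scalar $c_I$; it remains to identify $c_I$.

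For $c_I$ I would differentiate. Since $\tfrac{d}{dx}\CW_n(x)=\CW_n(x)E$, right multiplication by $E$ shifts each column index down by one, and the derivative of the determinant is the sum over its columns of the determinant with that column differentiated. Differentiating the column indexed by $i_b$ replaces $i_b$ by $i_b-1$; the resulting term vanishes when $i_b=1$ (zero column) or when $i_b-1\in I$ (repeated column), so
\[
\tfrac{d}{dx}\Delta_{[i],I}(\CW_n(x))=\sum_{J}\Delta_{[i],J}(\CW_n(x)),
\]
the sum over those $J\in\CC_n^i$ with $I=\Delta_j J$ for some $j$ (move one ball one box left into a free box), which satisfy $\fl(J)=\fl(I)-1$. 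Comparing coefficients of $x^{\fl(I)-1}$ gives $c_I\,\fl(I)=\sum_J c_J$; writing $P(I):=\fl(I)!\,c_I$ this becomes $P(I)=\sum_J P(J)$ with $P(I_\minn)=1$ (the $I_\minn$-minor is the identity block). This is exactly the recursion computing the number of paths from $I_\minn$ to $I$ in the graph $\Gamma_n^i$ of Section~\ref{section3.5}, so $P(I)=m(I)$. Therefore $c_I=m(I)/\fl(I)!$ and
\[
\Delta_{[i],I}(\CW_n(x))=\frac{x^{\fl(I)}}{n(I)},\qquad n(I):=\frac{\fl(I)!}{m(I)}.
\]

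Finally, for integrality I would note that $m(I)$ counts saturated chains from $\emptyset$ to the partition $\mu(I)$ with $\mu(I)_b=i_{i+1-b}-(i+1-b)$ and $|\mu(I)|=\fl(I)$ in Young's lattice, i.e.\ $m(I)=f^{\mu(I)}$, the number of standard Young tableaux; by the hook-length formula $f^{\mu}=|\mu|!/\prod_{\text{cells}}h$, whence $n(I)=\prod_{\text{cells}}h\in\BZ_{>0}$. The step needing the most care is the differentiation identity: verifying that differentiating the minor really lowers exactly one column index and that the two boundary cases drop out, so that the induced recursion coincides precisely with the path count $m(I)$ of Theorem~\ref{theorem3.3}. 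Once that is in place, homogeneity and the hook-length identification make the rest immediate, and the relation $m(I)\,n(I)=\fl(I)!$ records the compatibility with Theorem~\ref{theorem3.3}.
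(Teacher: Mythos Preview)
Your argument is correct. The homogeneity step is clean, and the differentiation identity is exactly right: since $\CW_n(x)=e^{xE}$, differentiating the $i_b$-column produces the $(i_b-1)$-column, the two degenerate cases ($i_b=1$ or $i_b-1\in I$) vanish, and the resulting recursion $P(I)=\sum_{J\prec I}P(J)$ with $P(I_\minn)=1$ is precisely the path count $m(I)$. The hook-length identification then gives $n(I)=\prod h\in\BZ_{>0}$.

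Your route, however, is not the paper's. The paper does not compute the minor directly; instead it observes that $\CW_\infty(x)$ is the upper-triangular Toeplitz matrix with $h_j=x^j/j!$, invokes Claim~\ref{claim4.5.0} (the Jacobi--Trudi identity $\Delta_{I^t}(T)=s_{\nu(I)}(h)$) to recognise the minor as a Schur function, and then cites Segal--Wilson \cite[Proposition~8.6]{SW} for the evaluation $s_\nu(x,0,\dots)=\fh(S)x^{\ell(S)}$, which is Claim~\ref{claim4.3.1}. Your approach is more elementary and entirely self-contained up to the hook-length formula, and it has the bonus of deriving the identity $n(I)=\fl(I)!/m(I)$ directly, which the paper obtains only later as the ``hook lemma'' (Lemma~\ref{lemma4.4.2.1}). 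The paper's approach, on the other hand, makes the link to Schur functions explicit from the start, which is the thematic point of Section~\ref{section4} and what drives Theorems~\ref{theorem4.4.2} and~\ref{theorem4.5.2}.
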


The exact value of $n(I)$ will be given below, see Claim~\ref{claim4.3.1}.
\begin{Example}\label{example4.1.2}
\[
\Delta_{14}(\CW_4(x)) = \frac{x^2}{2}.
\]
\end{Example}

\subsection[Schur functions, and embedding of a finite Grassmanian into the semi-infinite one]{Schur functions, and embedding of a finite Grassmanian\\ into the semi-infinite one}\label{section4.2}

 Recall one of possible definitions for Schur functions, cf.\ \cite[Section~8]{SW}. Let
\[
\nu = (\nu_0, \nu_1, \dots)
\]
be a {\em partition}, i.e., $\nu_i \in \BZ_{\geq 0}$, $\nu_i \geq \nu_{i-1}$ and there exists
$r$ such that $\nu_i = 0$ for $i\geq r$.

A Schur function
\[
s_\nu(h_1, h_2, \dots ) = \det (h_{\nu_i - i + j})_{i,j = 0}^{r-1},
\]
where the convention is $h_0 = 1$, $h_i = 0$ for $i < 0$,
cf.\ the {\it Jacobi--Trudi} formula~\cite[Chapter~I, equation~(3.4)]{M}.

\begin{Examples}\label{examples4.2.1}
\begin{gather*}
s_{11} = \left|\begin{matrix}
h_1 & h_2\\
1 & h_1
\end{matrix}\right| = h_1^2 - h_2,
\\
s_{111} = \left|\begin{matrix}
h_1 & h_2 & h_3\\
1 & h_1 & h_2\\
0 & 1 & h_1
\end{matrix}\right| = h_1^3 - 2h_1h_2 + h_3.
\end{gather*}
\end{Examples}

\subsubsection{Electrons and holes}
Let $\CC_{\infty/2}$ denote the set of subsets
\[
S = \{a_0, a_1, \dots, \}\subset \BZ
\]
such that both sets
$S\setminus \BN$, $\BN\setminus S$ are finite, and there exists $d\in \BZ$ such that $a_i = i - d$ for $i$ sufficiently large.

The number $d = d(S)$ is called the {\it virtual dimension} of $S$.

The elements of $\CC_{\infty/2}$ enumerate the cells of the {\it semi-infinite Grassmanian}, cf.~\cite{SW}.

We can imagine such $S$ as the set of boxes numbered by $i\in \BZ$, with balls put
into the boxes with numbers $a_0$, $a_1$, etc.

We can define the virtual dimension also as
\[
d(S) = |S\setminus \BN| - |\BN\setminus S|.
\]
Set{\samepage
\[
\CC_{\infty/2}^d := \{S\in \CC_{\infty/2}\,|\, d(S) = d\}\subset
\CC_{\infty/2}.
\]
For the moment we will be interested in $\CC_{\infty/2}^0$.}

We can get each element of $\CC^0_{\infty/2}$ by starting from the {\it vacuum state, or Dirac sea}{\samepage
\[
S_0 = \{0, 1, \dots \},
\]
where the boxes $0, 1, \dots$ being filled, and then moving some $n$ balls to the left.}

Let us assign to
\[
S = \{a_0, a_1, \dots \}\in \CC^0_{\infty/2}
\]
a partition $\nu = \nu(S)$ by the rule
\begin{gather*}
\nu_i = i - a_i.
\end{gather*}
This way we get a bijection between $\CC^0_{\infty/2}$ and the set of partitions,
cf.~\cite[Lemma~8.1]{SW}.

\begin{Example}\label{example4.2.2}
 If
\[
S = \{-2, -1, 2, 3, \dots\}
\]
then $\lambda(S) = (22)$.
\end{Example}

\subsubsection{From semi-infinite cells to finite ones}
For $n\in \BN$ let
\begin{gather*}
\CC_{\infty/2, n} = \{ S = (a_0, a_1, \dots)\,|\, a_0\geq - n, a_{n} = n\} \subset \CC_{\infty/2};
\end{gather*}
note that $a_n = n$ implies $a_m = m$ for all $m\geq n$.

Let
\[
\CC^0_{\infty/2, n} = \CC_{\infty/2, n} \cap \CC^0_{\infty/2}.
\]

Note that we have a bijection
\begin{gather*}
\CC^{0}_{\infty/2, n} \isom \CC^n_{2n},
\end{gather*}
obvious from the ``balls in boxes'' picture.

Namely, we have inside $\CC^{0}_{\infty/2, n}$ the minimal state
\[
S_\minn = (a_i)\qquad \text{with}\quad a_i = 1\quad \text{for} \ -n \leq i \leq -1 \ \text{and for} \ i\geq n
\]
from which one gets all other states in $\CC^{0}_{\infty/2, n}$ by moving the balls to the right, until we reach the maximal state
\[
S_\maxx = (b_i)\qquad \text{with}\ b_i = 1 \ \text{and for}\ i\geq 0.
\]

For
\[
I\in \CC_{2n}^n\isom \CC_{\infty/2,n}^0\subset \CC^0_{\infty/2}
\]
we will denote by $\nu(I)$ the corresponding partition.

\subsubsection{Transposed cells} \label{section4.2.4}
For $I\in \CC_n^i$ let $I^t = I'\in \CC_n^i$ denote the
``opposite'', or transposed cell which in ``balls and boxes'' picture it is obtained
by reading $I$ from right to left.
The corresponding partition $\lambda(I^t) = \lambda(I)^t$ has the transposed Young diagram.

\subsection{Initial Schur functions and the Wronskian}\label{section4.3}
 Let us introduce new coordinates $t_1, t_2, \dots $ related to~$h_j$ by the formula
\[
e^{\sum_{i=1}^\infty t_iz^i} = 1 + \sum_{j=1}^\infty h_jz^j,
\]
cf.\ \cite[equation~(8.4)]{SW}.

For example
\[
h_1 = t_1, \qquad h_2 = t_2 + \frac{t_1^2}{2},
\]
etc.

Let us consider the Schur functions $s_\nu$ as functions of $t_i$. The first coordinate
$x = t_1$ is called {\em the space variable}, whereas $t_i$, $i\geq 2$, are ``the times''.

We will be interested in the ``initial'' Schur functions, the values of $s_\nu(t)$ for
$t_2 = t_3 = \dots = 0$.

By definition,
\begin{gather}
h_i(x,0, 0, \dots) = \frac{x^i}{i!}.
\label{eq4.3.1}
\end{gather}

Let us return to the unit Wronskian.
It is convenient to consider a limit $\BZ_{> 0}\times \BZ_{> 0}$ Wronskian matrix
\begin{gather*}
\CW_\infty(x) = \lim_{n\ra\infty} \CW_n(x) = \CW\big(1, x, x^2/2, \dots\big)
= \left(\begin{matrix}
1 & x & x^2/2 & x^3/6 & \dots\\
0 & 1 & x & x^2/2 & \dots\\
0 & 0 & 1 & x & \dots\\
\dots & \dots &\dots &\dots &\dots\\
\end{matrix}\right).
\end{gather*}
Let
\[
\nu\colon \ \nu_0\geq \dots \geq \nu_r > 0
\]
be a partition, $S = S(\nu) = (a_i)\in\CC_{\infty/2}^0$; define
\begin{gather*}
\ell(S) = \sum_{i\geq 0} (i - a_i) = \sum_{i\geq 0} \nu_i,
\end{gather*}
cf.\ \cite[a formula after Proposition~2.6]{SW},
and ``the hook factor''
\begin{gather*}
\fh(S) = \fh(\nu) = \frac{\prod_{0\leq i<j\leq r} (a_j - a_i)}{\prod_{0\leq i\leq r} (r - a_i)!},
\end{gather*}
cf.\ \cite[Section~I.1, Example~1, equation~(4)]{M}.

On the other hand suppose that $\nu = \nu(I)$ for some $I = I(\nu)\in\CC_{2n}^n$, cf.\ Claim~\ref{claim3.2.3}.

\begin{Claim}\label{claim4.3.1}
\begin{alignat*}{3}
& (i) \quad && s_\nu(x, 0, 0, \dots) = \fh(S)x^{\ell(S)}, &\\
& (ii)\quad && \Delta_{I^t}(\CW_\infty(x)) = \fh(S) x^{\ell(S)}.
\end{alignat*}
\end{Claim}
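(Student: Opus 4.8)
The plan is to prove the two assertions in parallel: part~(i) by an explicit Jacobi--Trudi computation, and part~(ii) by recognising the Wronskian minor as that very same Jacobi--Trudi determinant evaluated at $t_2 = t_3 = \dots = 0$.

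First I would treat~(i). By the $(r+1)\times(r+1)$ Jacobi--Trudi formula $s_\nu = \det(h_{\nu_i - i + j})_{0\leq i,j\leq r}$ and the specialisation \eqref{eq4.3.1}, the $(i,j)$ entry becomes $x^{\mu_i + j}/(\mu_i + j)!$ with $\mu_i := \nu_i - i = -a_i$. In the Leibniz expansion every term carries the factor $x^{\sum_i(\mu_i + \sigma(i))}$, and since $\sum_i \sigma(i) = \sum_i i$ this equals $x^{\sum_i(\mu_i + i)} = x^{\sum_i \nu_i} = x^{\ell(S)}$, a power independent of $\sigma$. Hence $x^{\ell(S)}$ factors out and $s_\nu(x,0,\dots) = x^{\ell(S)}\,\det\!\big(1/(\mu_i+j)!\big)$. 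It remains to identify the residual determinant with $\fh(S)$: multiplying the $i$-th row by $(\mu_i + r)! = (r - a_i)!$ turns its entries into the monic polynomials $(\mu_i + r)(\mu_i + r - 1)\cdots(\mu_i + j + 1)$ of degree $r - j$ in $\mu_i$, so by column reduction the determinant equals the Vandermonde $\det(\mu_i^{\,r-j})$. Undoing the row scalings gives $\det(1/(\mu_i+j)!) = \prod_{0\leq i<i'\leq r}(a_{i'} - a_i)\big/\prod_{0\leq i\leq r}(r - a_i)! = \fh(S)$, which is exactly~(i).

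The one delicate point here is the sign. Reversing the $r+1$ columns of the Vandermonde contributes $(-1)^{\binom{r+1}{2}}$, and the substitution $\mu_i = -a_i$ contributes a second $(-1)^{\binom{r+1}{2}}$; the two cancel, so the manifestly positive quantity $\fh(S)$ (recall $a_0 < a_1 < \dots < a_r$) is recovered with the correct sign.

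For~(ii) I would observe that, by \eqref{eq4.3.1}, the limit matrix $\CW_\infty(x)$ is the Toeplitz matrix whose $(p,q)$ entry is $h_{q-p}(x,0,\dots)$. Consequently the maximal minor $\Delta_{I^t}(\CW_\infty(x))$ is a determinant of the $h$'s, and the virtual-dimension-zero identification $\CC^0_{\infty/2,n}\isom\CC^n_{2n}$ together with the passage to the transposed cell $I^t$ is arranged precisely so that its rows and columns realise the index pattern $h_{\nu_i - i + j}$ of the Jacobi--Trudi matrix for $\nu = \nu(I)$; thus $\Delta_{I^t}(\CW_\infty(x)) = s_\nu(x,0,\dots)$ and~(i) finishes the proof. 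I expect the main obstacle to be exactly this index bookkeeping: verifying that reading $I$ right to left, after the shift by $n$ that sends the (possibly negative) labels $a_i$ into honest matrix indices, produces the consecutive-column/$a_i$-row configuration demanded by Jacobi--Trudi. One pleasant simplification is that the specialised Schur function $s_\nu(x,0,\dots)$ depends on $\nu$ only through its multiset of hook lengths, so it coincides with $s_{\nu^t}(x,0,\dots)$; hence one need not worry whether the minor a priori computes $\nu$ or its conjugate, and in either case the value is $\fh(S)x^{\ell(S)}$. Comparison with Claim~\ref{claim4.1.1} then identifies the integer there as $n(I^t) = 1/\fh(S)$.
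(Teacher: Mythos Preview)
Your argument is correct and follows the same route as the paper. The paper's own proof is just two citations: part~(i) is attributed to \cite[proof of Proposition~8.6]{SW}, which carries out the same Jacobi--Trudi/Vandermonde computation you give, and part~(ii) is deduced from the more general Claim~\ref{claim4.5.0} (the Toeplitz minor $\Delta_{I^t}(T)$ equals $s_{\nu(I)}(h)$), which upon the specialisation $h_j = x^j/j!$ turns $T$ into $\CW_\infty(x)$ and reduces~(ii) to~(i) --- exactly your strategy. Your extra remark that $s_\nu(x,0,\dots)=s_{\nu^t}(x,0,\dots)$ neatly sidesteps the column/row bookkeeping that Claim~\ref{claim4.5.0} would otherwise require; the paper simply states that claim without proof.
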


\begin{proof} (i) is \cite[proof of Proposition~8.6]{SW}.
(ii) is a consequence of a more general Claim~\ref{claim4.5.0} below.
\end{proof}

\begin{Example}\label{example4.3.2}
 Let
\[
I = (1100),\qquad
S = \{ -2, -1, 2, 3, \dots\},
\]
then
\[
\nu = (22);\qquad \ell(S) = 4,\qquad \fh(S) = \frac{1}{12}.
\]
Then
\begin{alignat*}{3}
& (i)\quad && s_\nu = h_2^2 - h_1h_3 = \frac{t_1^4}{12} - t_1t_3 + t_2^2,&\\
& (ii)\quad && I(\nu) = (1100), \qquad I'(\nu) = (0011),&\\
&&& \Delta_{I'}(\CW_\infty(x)) = \left|\begin{matrix}
x^2/2 & x^3/6\\
x & x^2/2
\end{matrix}\right| = \frac{x^4}{12}.&
\end{alignat*}
\end{Example}

\subsection[Polynomials yn(g)(x) and initial tau-functions: the middle case]{Polynomials $\boldsymbol{y_n(g)(x)}$ and initial tau-functions: the middle case}\label{section4.4}

Let $n \geq 1$. For
\[
I\in \CC_{2n}^n \isom \CC^0_{\infty/2, n}\subset \CC^0_{\infty/2}
\]
let $\nu(I)$ denote the corresponding partition.

Consider the Grassmanian
\[
\Gr_{2n}^n = {\rm GL}_{2n}/P_{n,n}.
\]
For a matrix $g\in {\rm GL}_{2n}$ we define its tau-function $\tau(g)$ which will be a function
of variables $t_1, t_2, \dots, $ by
\begin{gather}
\tau(g)(t) = \tau_n(g) = \sum_{I\in \CC_{2n}^n} \Delta_I(g)s_{\nu(I)}(t),
\label{eq4.4.1}
\end{gather}
cf.\ \cite[Proposition~8.3]{SW}.

Abuse of the notation; better notation: $\tau(\bar g)$, $\bar{g}\in \Gr_{2n}^n$.
This subspace is described below, see Section~\ref{section4.6}.

\begin{Examples}\label{examples4.4.1}
(a) $n = 1$. There are $2$ cells in $\Gr_2^1 = \BP^1$:
\[
(10) \lra (01)
\]
(the arrow indicates the Bruhat order),
which correspond to the following semi-infinite cells of virtual dimension $0$:
\[
(-1,1,2, \dots),\qquad (0,1,2, \dots),
\]
which in turn correspond to partitions
\[
(1), \qquad ()
\]
with Schur functions
\[
s_{(1)} = h_1 = t_1, \qquad s_{()} = 1.
\]
Correspondingly,
for $g\in {\rm GL}_2$, the middle tau-function $\tau_1(g)$ has $2$ summands:
\[
\tau_1(g) = \Delta_1(g)s_{(1)} + \Delta_2(g)s_{()} = a_{11}t_1 + a_{12}
\]
for $g = (a_{ij})$.

{\em Differential equation.}
Suppose for simplicity that $a = a_{12} = 1$, introduce the notation $x = t_1$ for the space variable, so $\tau(g) = 1 + ax$.

Let
\begin{gather*}
u(x) = 2\frac{{\rm d}^2\log\tau(g)}{{\rm d}^2x}.
\end{gather*}
Then
\[
u(x) = - \frac{2a^2}{(1 + ax)^2}.
\]
It satisfies a differential equation
\[
6 u u_x + u_{xxx} = 0,
\]
which is the stationary KdV.

(b) $n = 2$. There are $6$ cells in $\Gr_4^2$:
\[
(1100)\lra \begin{matrix} & (1001) & \\
\nearrow & & \searrow & \\
(1010) & & (0101)\\
\searrow & & \nearrow & \\
 & (0110)
\end{matrix}
\lra (0011)
\]
the arrows indicate the Bruhat, or {\it balls in boxes} order: we see how $1$'s (the balls) are moving to the right to the empty boxes.

They correspond to the following semi-infinite cells of virtual dimension $0$:
\begin{gather*}
(-2, -1, 2, 3, \dots),\qquad (-2, 0, 2, 3, \dots),\qquad (-2, 1, 2, 3, \dots),\\
(-1, 0, 2, 3, \dots),\qquad (-1, 1, 2, 3, \dots),\qquad (0, 1, 2, 3, \dots),
\end{gather*}
which in turn correspond to partitions
\[
(22)\lra \begin{matrix} & (2) & \\
\nearrow & & \searrow & \\
(21) & & (1)\\
\searrow & & \nearrow & \\
 & (11)
\end{matrix}
\lra ()
\]
with Schur functions:
\begin{gather*}
s_{(22)} = h_2^2 - h_3h_1 = \frac{t_1^4}{12} + t_2^2 - t_1t_3,
\qquad
s_{(21)} = h_1h_2 - h_3 = \frac{t_1^3}{3} - t_3,\\
s_{(2)} = h_2 = \frac{t_1^2}{2} + t_2,
\qquad
s_{(11)} = h_1^2 - h_2 = \frac{t_1^2}{2} - t_2,\qquad
s_{(1)} = h_1 = t_1,\qquad s_{()} = 1.
\end{gather*}
Thus for $g\in {\rm GL}_4$, $\tau_2(g)$ has $6$ summands:
\begin{gather}
\tau_2(g) = \Delta_{12}(g)s_{(22)} + \Delta_{13}(g)s_{(21)} + \Delta_{23}(g)s_{(11)} +
 \Delta_{14}(g)s_{(2)} + \Delta_{24}(g)s_{(1)} + \Delta_{34}(g)s_{()}\nonumber\\
\hphantom{\tau_2(g)}{}
= \Delta_{34}(g) + \Delta_{24}(g)t_1 + (\Delta_{14}(g) + \Delta_{23}(g) )\frac{t_1^2}{2} +
 (\Delta_{14}(g) - \Delta_{23}(g) )t_2 \nonumber\\
\hphantom{\tau_2(g)=}{}
+ \Delta_{13}(g)\left(\frac{t_1^3}{3} - t_3\right) + \Delta_{12}(g)
\left(\frac{t_1^4}{12} + t_2^2 - t_1t_3\right),
\label{eq4.4.3}
\end{gather}
where $\Delta_{ij}(g)$ denotes the minor with $i$-th and $j$-th columns.
It depends on $3$ variables $x = t_1$, $y = t_2$, $t = t_3$.
\end{Examples}

We deduce from the above a result from \cite[Lemma~7.5]{VW}:

\begin{Theorem}\label{theorem4.4.2}
If
\[
\fW(g) = (y_1(g), \dots, y_{2n}(g))
\]
then
\[
\tau(g)(x, 0, \dots) = \ty_n(g)(x).
\]
Here we use the reciprocal polynomials $\ty_i(g)$ defined in \eqref{eq3.3.1}.
\end{Theorem}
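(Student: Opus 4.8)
The plan is to evaluate the defining sum \eqref{eq4.4.1} for $\tau(g)$ at the special point $t = (x,0,0,\dots)$ and to recognize the resulting polynomial in $x$ as the reciprocal Wronskian $\ty_n(g)$. First I would substitute $t_2 = t_3 = \dots = 0$, so that
\[
\tau(g)(x,0,\dots) = \sum_{I\in\CC_{2n}^n}\Delta_I(g)\,s_{\nu(I)}(x,0,\dots),
\]
and the whole problem collapses to understanding a single Schur function $s_{\nu(I)}$ once only the space variable survives. At this point I would invoke Claim~\ref{claim4.3.1}: part~(i), together with \eqref{eq4.3.1}, reduces each Schur function to a monomial $s_{\nu(I)}(x,0,\dots) = \fh(S)x^{\ell(S)}$ with $S = S(\nu(I))$, while part~(ii) identifies that very monomial with a minor of the unit Wronskian, $\fh(S)x^{\ell(S)} = \Delta_{I^t}(\CW_\infty(x))$. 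Substituting yields the clean intermediate identity
\[
\tau(g)(x,0,\dots) = \sum_{I\in\CC_{2n}^n}\Delta_I(g)\,\Delta_{I^t}(\CW_\infty(x)),
\]
in which the only surviving datum about $g$ is its family of maximal minors $\Delta_I(g)=\Delta_{[n],I}(g)$.

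The next step is to compare this sum against $\ty_n(g)$. Here I would read Theorem~\ref{theorem3.3} backwards through the classical Wronskian expansion (Cauchy--Binet): writing the rows of $g$ as polynomials in the basis $t^{k-1}/(k-1)!$, the Wronskian $y_n(g)(t)=W(b_1,\dots,b_n)(t)$ expands as $\sum_I \Delta_{[n],I}(g)\,\Delta_{[n],I}(\CW_\infty(t))$, which is exactly Theorem~\ref{theorem3.3} once one records, via Claim~\ref{claim4.1.1}, that $\Delta_{[n],I}(\CW_\infty(t)) = x^{\fl(I)}/n(I)$. Passing to the reciprocal polynomial \eqref{eq3.3.1} replaces each exponent $\fl(I)$ by $d_n-\fl(I)$ with $d_n = n^2$, and the length symmetry of Section~\ref{section3.2.4}, namely $\fl(I^t)=d_n-\fl(I)$, is precisely what matches the exponents $x^{\fl(I^t)}$ produced by the Schur functions above against the exponents $x^{d_n-\fl(I)}$ carried by $\ty_n(g)$. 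Thus both polynomials have the shape $\sum_I \Delta_I(g)$ times a monomial of degree $d_n-\fl(I)$, and only the scalar coefficients remain to be reconciled.

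I expect that coefficient comparison to be the main obstacle, since it is the point where all the normalizations collide: one must reconcile the constants $n(I^t)$ coming from the unit Wronskian (Claim~\ref{claim4.1.1}) with the hook factor $\fh(S)$ governing the initial Schur functions and with the path multiplicities $m(I)$ entering $\ty_n$ through Theorem~\ref{theorem3.3}. The natural tool is the hook-length formula, $\fh(S)=1/\prod(\text{hooks})$ as illustrated in Example~\ref{example4.3.2}, combined with the interpretation of $m(I)$ as a count of lattice paths in the graph $\Gamma_{2n}^n$ of Section~\ref{section3.5} and the upside-down (transpose) symmetry of that graph recorded in Section~\ref{section3.5.3}, which is exactly what lets one pass between $I$ and $I^t$. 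Some care is needed here, because it is precisely in this scalar matching that the conventions for the Pl\"ucker normalization of $\tau$ and for the reciprocal $\ty_n$ must be aligned; a direct low-rank check ($n=1,2$) is the safest way to pin down the correct normalization before treating the general cell. Once the scalar identity is verified for every $I\in\CC_{2n}^n$, summing over the cells gives $\tau(g)(x,0,\dots)=\ty_n(g)(x)$, which is the assertion of \cite[Lemma~7.5]{VW}.
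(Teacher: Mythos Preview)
Your plan is correct and follows essentially the same route as the paper. The paper too evaluates \eqref{eq4.4.1} at $t_2=t_3=\dots=0$, invokes Claim~\ref{claim4.3.1} to replace each Schur function by $\fh(\nu(I))x^{\ell(S)}$, expands $\ty_n(g)$ through Theorem~\ref{theorem3.3} together with the transpose symmetry, and then matches the two sums cell by cell; the ``main obstacle'' you anticipate is isolated there as Lemma~\ref{lemma4.4.2.1} (the \emph{hook lemma}), which is precisely the identity $\fh(\nu(I))=m(I')/\fl(I')!$ you propose to prove via the hook-length formula and the upside-down symmetry of $\Gamma_{2n}^n$. Your detour through Cauchy--Binet and the minors $\Delta_{[n],I}(\CW_\infty)$ is a harmless rephrasing of the same expansion.
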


The initial values of tau-functions $\tau(g)(x, 0, \dots)$ make their appearance
in \cite[Proposition~8.6]{SW}.

\begin{proof} We use the definition \eqref{eq4.4.1}:
\[
\tau_n(g) = \sum_{I\in\CC^n_{2n}}\Delta_I(g)s_{\nu(I)}(t_1, t_2, \dots),
\]
then put $t_2 = t_3 = \dots = 0$ in it:
\[
\tau_n(g)(x, 0, \dots) = \sum_{I\in\CC^n_{2n}}\Delta_I(g)\fh(\nu(I))x^{\fl(I)}
\]
by Claim~\ref{claim4.3.1}.

On the other hand, by Theorem~\ref{theorem3.3}
\[
y_n(g)(x) = \sum_{I\in\CC^n_{2n}}\Delta_I(g)m(I)\frac{x^{\fl(I)}}{\fl(I)!},
\]
whence
\begin{gather*}
\ty_n(g)(x) = \sum_{I\in\CC^n_{2n}}\Delta_I(g)m(I')\frac{x^{\fl(I')}}{\fl(I')!}.\tag*{\qed}
\end{gather*}\renewcommand{\qed}{}
\end{proof}

\begin{Lemma}[hook lemma]\label{lemma4.4.2.1} For all $I$
\[
\fh(\nu(I)) = \frac{m(I')}{\fl(I')!}.
\]
\end{Lemma}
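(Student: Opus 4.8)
The plan is to prove that both sides of the identity equal the reciprocal of the product of the hook lengths of the Young diagram of $\nu(I)$,
\[
\fh(\nu(I)) = \frac{1}{\prod_{x\in\nu(I)} h(x)} = \frac{m(I')}{\fl(I')!},
\]
computing the two outer equalities independently and then comparing. Throughout I identify $I\in\CC_{2n}^n$ with its Young diagram $\nu(I)$ sitting in the $n\times n$ square, as in Section~\ref{section4.2}.

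For the right-hand side I would argue combinatorially. By Corollary~\ref{corollary3.5.2} and the discussion following it, $m(I')=|\fm(I')|$ is the number of paths in $\Gamma_{2n}^n$ from $I_\minn$ to $I'$, all of length $\fl(I')$ by Claim~\ref{claim3.5.1}. Applying the order-reversing symmetry of Section~\ref{section3.5.3} (which sends $I\mapsto I'$, interchanges $I_\minn$ and $I_\maxx$, and reverses every edge) turns such paths into paths from $I$ to $I_\maxx$, so $m(I')$ equals the number of the latter. Now each elementary move $\Delta_j$ shifts a ball one box to the right, which in the diagram dictionary deletes one box; since $I\leftrightarrow\nu(I)$ and $I_\maxx\leftrightarrow\varnothing$, a path $I\to I_\maxx$ is precisely a maximal chain $\nu(I)=\lambda^{(0)}\supset\lambda^{(1)}\supset\cdots\supset\varnothing$ in Young's lattice, i.e.\ a standard Young tableau of shape $\nu(I)$ read in reverse. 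Hence $m(I')=f^{\nu(I)}$, and counting the steps of any such path gives $\fl(I')=|\nu(I)|$. The hook length formula $f^{\nu(I)}=|\nu(I)|!/\prod_{x}h(x)$ then yields $m(I')/\fl(I')!=1/\prod_x h(x)$.

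For the left-hand side I would compute $\fh(\nu(I))$ directly from its definition in Section~\ref{section4.3}. Writing $\nu=(\nu_0\geq\cdots\geq\nu_r)$ and $S=(a_i)$ with $a_i=i-\nu_i$, I set $\ell_i:=r-a_i=\nu_i+r-i$; these are strictly decreasing nonnegative integers. Then $r-a_i=\ell_i$ and, for $i<j$, $a_j-a_i=\ell_i-\ell_j$, so the defining expression becomes
\[
\fh(\nu(I))=\frac{\prod_{0\leq i<j\leq r}(a_j-a_i)}{\prod_{0\leq i\leq r}(r-a_i)!}=\frac{\prod_{i<j}(\ell_i-\ell_j)}{\prod_i \ell_i!}.
\]
Macdonald's classical product formula for hook lengths, \cite[Section~I.1, Example~1]{M}, states $\prod_{x}h(x)=\prod_i\ell_i!\big/\prod_{i<j}(\ell_i-\ell_j)$, whence $\fh(\nu(I))=1/\prod_x h(x)$. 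Comparing with the previous paragraph proves the lemma.

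The main obstacle will be keeping the conventions straight. The key point that makes the argument clean is to run the combinatorial side through $I_\maxx$ rather than $I_\minn$: reading paths toward $I_\maxx$ makes them standard Young tableaux of $\nu(I)$ outright and sidesteps any transpose-versus-complement ambiguity in relating $\nu(I')$ to $\nu(I)$. On the analytic side one must match the index shift $\ell_i=\nu_i+r-i$ precisely so that the given ratio for $\fh$ coincides term-for-term with Macdonald's hook product; once this dictionary is fixed, both the tableau identification and the Macdonald formula are entirely standard.
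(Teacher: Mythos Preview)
Your argument is correct. You identify both sides with $1/\prod_{x\in\nu(I)}h(x)$: on the right you use the order-reversing symmetry of $\Gamma_{2n}^n$ to turn $m(I')$ into the number of maximal chains $\nu(I)\supset\cdots\supset\varnothing$, i.e.\ $f^{\nu(I)}$, and then invoke the Frame--Robinson--Thrall hook length formula; on the left you rewrite $\fh(\nu)$ via $\ell_i=\nu_i+r-i$ and quote Macdonald's hook product identity. Both steps are standard and the dictionaries are set up correctly (in particular $\fl(I')=|\nu(I)|$ and $a_j-a_i=\ell_i-\ell_j$).

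The paper, by contrast, proves the lemma by a bare ``induction on the number of cells in the Young diagram of $\nu$'', with no further detail. Presumably the intended step is to combine the recursion $m(I')=\sum_{J'\lessdot I'} m(J')$ with the branching identity $\sum_{\mu\lessdot\nu}\fh(\mu)\,|\mu|!=\fh(\nu)\,|\nu|!$ (equivalently, the recursion satisfied by $f^{\nu}$), which is itself a restatement of the hook-length recursion. So your route and the paper's are closely related but packaged differently: the paper's induction is self-contained once one checks that $\fh(\nu)\,|\nu|!$ satisfies the same additive recursion as $m(I')$, whereas your argument outsources that verification to the classical hook length formula and Macdonald's product identity. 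Your approach has the advantage of making transparent why the statement deserves the name ``hook lemma'': both sides literally equal the reciprocal hook product.
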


Recall that $m(I)$ is the number of paths from $I_\minn$ to~$I$.
\begin{proof} Induction on the number of cells in the Young diagram of~$\nu$.\end{proof}

\begin{Example}\label{example4.4.2.2}
$n = 4$, $I = I_\minn = (1100)$, $I' = I_\maxx = (0011)$, $\nu(I) = (22)$, $m(I') = 2$, $\fl(I') = 4$,
\[
S(I) = \{-2, -1, 2, 3, \dots\},\qquad \fh(\nu(I)) = \frac{1}{12} = 2\frac{1}{24}.
\]
\end{Example}

Theorem~\ref{theorem4.4.2} follows from the above.

\begin{Example}\label{example4.4.3}
Consider $\tau(g) = \tau_2(g)$ from Example~\ref{examples4.4.1}(b). Putting $t_2 = t_3 = 0$ into
\eqref{eq4.4.3} we get
\begin{gather}
\tau(g)(x,0,0) = \Delta_{34}(g) + \Delta_{24}(g)x + (\Delta_{14}(g) + \Delta_{23}(g) )\frac{x^2}{2}\nonumber\\
\hphantom{\tau(g)(x,0,0) =}{} + \Delta_{13}(g)\frac{x^3}{3} + \Delta_{12}(g)\frac{x^4}{12} = \ty_2(g)(x),
\label{eq4.4.5}
\end{gather}
see \eqref{eq3.4.1}.
\end{Example}

\subsubsection{Stationary solutions}\label{section4.4.4}
Let us return to the formula \eqref{eq4.4.3}. We see therefrom that if $g$ is such that
\[
\Delta_{14}(g) = \Delta_{23}(g),
\]
and
\[
\Delta_{13}(g) = \Delta_{12}(g) = 0,
\]
then $\tau(g)$ does not depend on $t_2$ and $t_3$, and therefore $\tau(g) = y_2(g)$.

Note the Pl\"ucker relation
\[
\Delta_{12}(g)\Delta_{34}(g) - \Delta_{13}(g)\Delta_{24}(g) + \Delta_{14}(g)\Delta_{23}(g)= 0,
\]
which implies $\Delta_{14}(g) = \Delta_{23}(g) = 0$, i.e., the only part which survives
will be
\[
\tau(g)(t_1) = \Delta_{34}(g) + \Delta_{24}(g)t_1.
\]

\subsection{Case of an arbitrary virtual dimension}\label{section4.5} Let $i\leq n$. Consider an embedding
\[
\CC_n^i\hra \CC_{\infty/2}^{d(n,i)},
\]
where $d(n,i)$ is chosen in such a way that $\nu(I_\maxx) = ()$ (this defines $d(n,i)$ uniquely).
Here $\nu(I)$ denotes the partition corresponding to $I\in \CC_n^i$ under the composition
\[
\CC_n^i\hra \CC_{\infty/2}^{d(n,i)}\isom \CC^0_{\infty/2},
\]
where the last isomorphism is a shift $(a_j) \mapsto (a_{j-i})$, and identifying
$\CC^0_{\infty/2}$ with the set of partitions.
More precisely, for any $d$ a sequence
$S = (a_0, a_1, \dots)$ belongs to $\CC^{d}_{\infty/2}$ iff
$a_i = i - d$ for $i \gg 0$.
To such $S$ there corresponds a partition
\begin{gather*}
\lambda(S) =(\lambda_1\geq \lambda_2 \geq \cdots),\qquad \lambda_i = a_i - i + d.
\end{gather*}

\subsubsection{Schur functions and minors}\label{section4.5.0}

\begin{Claim}\label{claim4.5.0}
Consider an upper triangular Toeplitz matrix
\[
T = \left(\begin{matrix}
1 & h_1 & h_2 & \dots\\
0 & 1 & h_1 & \dots\\
0 & 0 & 1 & \dots\\
\dots & \dots &\dots &\dots
\end{matrix}\right).
\]
Let $I \in \CC_n^i$. Then
\[
s_{\nu(I)}(h) = \Delta_{I^t}(T),
\]
where $I^t\in \CC_n^i$ is the transposed cell $($see Section~{\rm \ref{section4.2.4})}.
Note that for any $i$ and any $I \in \CC_n^i$ the function $s_{\nu(I)}(h)$ depends exactly on $h_1, \dots, h_{n-1}$.
\end{Claim}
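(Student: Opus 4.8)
\emph{Overall approach.} The plan is to recognize both sides of the asserted identity as $i\times i$ determinants and to match them by a symmetry of the Toeplitz matrix. The right-hand side $\Delta_{I^t}(T)$ is by definition the minor of $T$ on rows $\{1,\dots,i\}$ and columns $I^t$, so its entries are of the form $h_{c-p}$ with $p\in\{1,\dots,i\}$ and $c$ running over $I^t$. The left-hand side is, by the Jacobi--Trudi formula of Section~\ref{section4.2}, the determinant $\det\big(h_{\nu_p-p+q}\big)$ attached to the partition $\nu=\nu(I)$. Since $I\in\CC_n^i$ the partition $\nu(I)$ has at most $i$ parts, so I would first pad $\nu(I)$ with zero parts to length exactly $i$; as the examples $s_{(1)}$ versus the padded $(1,0)$ show, this leaves the Schur function unchanged and presents $s_{\nu(I)}$ as a genuine $i\times i$ determinant.

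\emph{Key steps.} The crucial observation is that the padded Jacobi--Trudi matrix is itself a minor of $T$: writing its $(p,q)$ entry as $h_{\mathrm{col}-\mathrm{row}}$ forces $\mathrm{col}-\mathrm{row}=\nu_p-p+q$, which is solved by taking \emph{consecutive} columns and rows displaced by the parts of $\nu$. Thus $s_{\nu(I)}$ is a minor of $T$ with consecutive columns and a row set governed by $\nu$, whereas $\Delta_{I^t}(T)$ is a minor with consecutive rows $\{1,\dots,i\}$ and column set $I^t$. These two descriptions are exchanged by (a) reversing the order of both the rows and the columns within the ambient window, and (b) transposing. Reversal of rows contributes the sign $(-1)^{i(i-1)/2}$, reversal of columns contributes the same sign, and transposition preserves the determinant, so all three operations together leave the determinant unchanged. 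The reversal is precisely the ``turn upside down'' symmetry of Section~\ref{section3.5.3}, and it is exactly this reversal that converts the cell $I$ into the transposed cell $I^t$ of Section~\ref{section4.2.4}; this is why $I^t$, rather than $I$, appears in the statement.

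\emph{Index bookkeeping and the depends-on claim.} The step that requires care, and which I expect to be the main obstacle, is verifying that the row and column sets produced by the reversal-and-transpose agree with the paper's specific bijection $I\mapsto\nu(I)$ from Section~\ref{section4.5}. Concretely, for $I=\{a_1<\dots<a_i\}$ one reads off $\nu_p=n-i+p+1-a_{p+1}$ from the balls-in-boxes picture, while $I^t$ has $q$-th smallest element $n+1-a_{i+1-q}$; substituting these into the reversed-and-transposed minor and simplifying shows that its $(p,q)$ entry carries index $n-i+1-a_{p+1}+q=\nu_p-p+q$, matching the Jacobi--Trudi entry exactly. This computation both completes the identity $s_{\nu(I)}(h)=\Delta_{I^t}(T)$ and yields the final assertion: since the columns of $I^t$ lie in $\{1,\dots,n\}$ and the rows in $\{1,\dots,i\}$, every entry $h_{c-p}$ has index between $0$ and $n-1$, with the largest part $\nu_0\leq n-i$ pushing the maximal index up to $h_{\nu_0+(i-1)}=h_{n-1}$; hence $s_{\nu(I)}(h)$ involves exactly $h_1,\dots,h_{n-1}$.
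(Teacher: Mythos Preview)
The paper does not supply a proof of Claim~\ref{claim4.5.0}; it is stated and immediately illustrated by Examples~\ref{examples4.5.0.1}, and elsewhere (proof of Claim~\ref{claim4.3.1}(ii)) it is simply invoked as ``a more general'' fact. So there is no paper proof to compare against; your task is really to justify what the authors regard as a restatement of Jacobi--Trudi.

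Your approach is correct. Writing $I=\{a_1<\dots<a_i\}$, one checks from the bijection of Section~\ref{section4.5} (verified on Examples~\ref{examples4.4.1}(b)) that $\nu_p=n-i+p-a_p$ for $1\le p\le i$, so the Jacobi--Trudi matrix has $(p,q)$ entry $h_{\nu_p-p+q}=h_{\,n-i+q-a_p}$. On the other side, the $q$-th element of $I^t$ is $c_q=n+1-a_{i+1-q}$, so the Toeplitz minor has $(p,q)$ entry $h_{c_q-p}=h_{\,n+1-a_{i+1-q}-p}$. The substitution $(p,q)\mapsto(i{+}1{-}p,\,i{+}1{-}q)$ followed by transposition carries one matrix to the other, and both operations preserve the determinant; this is exactly your ``reverse rows, reverse columns, transpose'' argument. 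Your bound on the $h$-indices is also correct and handles the last sentence of the claim (read ``depends exactly on'' as ``is a polynomial in'').

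Two small comments on the write-up. First, the detour through ``Jacobi--Trudi is a minor of $T$ with consecutive \emph{columns} and a row set governed by $\nu$'' is unnecessary: it is cleaner to compare the two $i\times i$ arrays of $h$'s directly, as above, without ever locating the Jacobi--Trudi matrix inside $T$. Second, your displayed formula $\nu_p=n-i+p+1-a_{p+1}$ mixes $0$-indexing on $\nu$ with $1$-indexing on $a$, which makes the subsequent line ``index $n-i+1-a_{p+1}+q=\nu_p-p+q$'' hard to parse; stating once that $\nu_p=n-i+p-a_p$ (both $1$-indexed) streamlines the bookkeeping considerably.
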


\begin{Examples}\label{examples4.5.0.1}
(a) $n= 4$, $i = 2$, $I = (1100)$, $I^t = (0011)$,
The corresponding semi-infinite cell is
$S(I) = \{-2, -1, 2, 3, \dots\}$, $d(4, 2) = 0$,
$\nu(I) = (22)$.

(b) $n= 4$, $i = 3$, $I = (1110)$, $I^t = (0111)$ The corresponding semi-infinite cell is
$S(I) = \{0, 1, 2, 4, 5, \dots\}$, $d(4,3) = - 1$, $\nu(I) = (111)$.
\end{Examples}

Afterwards we can apply the same construction as above:
to $g\in {\rm GL}_{2n+i}$ we assign a tau-function
\begin{gather*}
\tau(g)(t) = \sum_{I\in \CC_{2n + i}^n} \Delta_I(g)s_{\nu(I)}(t).
\end{gather*}

We deduce from the above a result from \cite[Lemma~7.5]{VW}:

\begin{Theorem}\label{theorem4.5.2}
Let
\[
\fW(g) = (y_1(g), \dots, y_{2n+i}(g)),
\]
then
\[
\ty_n(g)(x) = \tau(g)(x,0, \dots).
\]
Here $\ty_n$ denotes the reciprocal polynomial, see \eqref{eq3.3.1}.
\end{Theorem}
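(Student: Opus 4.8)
The plan is to repeat the proof of Theorem~\ref{theorem4.4.2} almost verbatim, the only change being that the middle Grassmanian $\CC_{2n}^n$ is replaced by $\CC_{2n+i}^n$ carrying the shifted virtual dimension $d(2n+i,n)$ of Section~\ref{section4.5}. First I would expand the tau-function by its definition, $\tau(g)(t) = \sum_{I\in\CC_{2n+i}^n}\Delta_I(g)s_{\nu(I)}(t)$, and set $t_2 = t_3 = \dots = 0$. By Claim~\ref{claim4.3.1}(i) each initial Schur function collapses to a single monomial, $s_{\nu(I)}(x,0,\dots) = \fh(\nu(I))x^{\ell(S(I))}$, so that $\tau(g)(x,0,\dots) = \sum_{I}\Delta_I(g)\fh(\nu(I))x^{\ell(S(I))}$.

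Next I would bring in the Wronskian side. Theorem~\ref{theorem3.3}, applied to the $n$-th component of $g\in{\rm GL}_{2n+i}$, gives $y_n(g)(x) = \sum_{I\in\CC_{2n+i}^n}\Delta_I(g)m(I)x^{\fl(I)}/\fl(I)!$. Passing to the reciprocal polynomial $\ty_n(g)$ of~\eqref{eq3.3.1} reverses the exponents, and the transpose symmetry of Sections~\ref{section3.2.4} and~\ref{section3.5.3} --- which interchanges $\fl(I)$ and $\fl(I')$ and pairs the path count of $I$ with that of its upside-down partner --- rewrites this as $\ty_n(g)(x) = \sum_{I}\Delta_I(g)m(I')x^{\fl(I')}/\fl(I')!$.

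Comparing the two expressions coefficientwise, the asserted identity $\ty_n(g)(x) = \tau(g)(x,0,\dots)$ reduces to two combinatorial facts that do not involve $g$: the length identity $\ell(S(I)) = \fl(I')$, which follows from the normalization $\nu(I_\maxx) = ()$ together with Corollary~\ref{corollary3.5.2}, and the hook lemma $\fh(\nu(I)) = m(I')/\fl(I')!$ of Lemma~\ref{lemma4.4.2.1}, which is stated for all $I$. The single ingredient that goes beyond the middle case is that the passage from a Schur function to a minor must be available for every virtual dimension, not just for $d = 0$; this is exactly Claim~\ref{claim4.5.0}, expressing $s_{\nu(I)} = \Delta_{I^t}(T)$ uniformly in $i$.

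I expect the main obstacle to be the bookkeeping of the shift $(a_j)\mapsto(a_{j-i})$ used to identify $\CC_{2n+i}^n\hookrightarrow\CC_{\infty/2}^{d(2n+i,n)}\isom\CC^0_{\infty/2}$: one must verify that the partition $\nu(I)$ attached to a cell $I\in\CC_{2n+i}^n$ through this shifted embedding is the very partition whose hook factor and transposed length feed into the Wronskian formula, so that $\ell(S(I))$ and $\fl(I')$ genuinely coincide. Once this identification is secured, both the hook lemma and the path-counting symmetry depend only on $\nu(I)$ and on the poset $\Gamma_{2n+i}^n$, not on the ambient shift, and the computation closes exactly as in Theorem~\ref{theorem4.4.2}.
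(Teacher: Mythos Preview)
Your proposal is correct and follows exactly the route the paper intends: the paper gives no separate proof of Theorem~\ref{theorem4.5.2}, but places it immediately after the setup of Section~\ref{section4.5} and Claim~\ref{claim4.5.0} with the words ``we deduce from the above'', i.e., one repeats the proof of Theorem~\ref{theorem4.4.2} (expand $\tau(g)$, specialize via Claim~\ref{claim4.3.1}, compare with Theorem~\ref{theorem3.3} and the reciprocal, then invoke the hook lemma~\ref{lemma4.4.2.1}) after using Claim~\ref{claim4.5.0} to make the Schur--minor identification valid for arbitrary virtual dimension. Your remark that the only genuine bookkeeping issue is the shift identifying $\CC_{2n+i}^n$ with a slice of $\CC^0_{\infty/2}$ is exactly right.
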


\begin{Example}[projective spaces]\label{example4.5.3}
 Let $n\geq 1$ be arbitrary.

(a) Let $i = 1$.
There are $n$ cells in $\BP^{n-1} = \Gr_n^1$:
\[
(10\dots 0)\lra (01\dots 0)\lra \cdots (00\dots 1),
\]
which correspond to the semi-infinite cells of virtual dimension $n - 1$:
\[
(10\dots 011\dots)\lra (01\dots 011\dots)\lra \cdots (00\dots 111\dots),
\]
(in BB picture), or
\[
(0,n,n+1, \dots) \lra (1,n,n+1, \dots) \lra (n-1,n,n+1, \dots)
\]
which correspond to the partitions
\[
(n-1) \lra (n-2) \lra \cdots \lra ()
\]
with Schur functions
\[
s_{(i)}(h) = h_i,
\]
whence
\[
s_{(i)}(x, 0, \dots) = \frac{x^i}{i!},
\]
cf.~\eqref{eq4.3.1}.

So for a matrix $g = (a_{ij})\in {\rm GL}_n$ its first tau-function
\[
\tau_1(g)(t_1,\dots, t_{n-1}) = \sum_{j=0}^{n-1} a_{1,j+1}h_{n-j-1},
\]
and
\[
\tau_1(g)(x, 0, \dots) = \sum_{j=0}^{n-1} a_{1,n-j}\frac{x^{j}}{j!} = \ty_1(g),
\]
as it should be, the contraction map {\em Pl\"ucker $\lra$ Wronsky} being the identity.

(b) {\em The dual $($conjugate$)$ space $\big(\BP^{n-1}\big)^\vee$.} Let $i = n - 1$.
There are $n$ cells in $\big(\BP^{n-1}\big)^\vee = \Gr_n^{n-1}$:
\[
(11\dots 110)\lra \cdots \lra (101\dots 1) \lra (01\dots 1),
\]
which correspond to the semi-infinite cells of virtual dimension $- 1$:
\[
(11\dots 1011\dots)\lra \cdots \lra (101111\dots)\lra (011\dots)
\]
(in BB picture), or to sequences $S$
\[
(0,1,\dots, n-1,n+1, \dots)\lra \cdots \lra (0,1,3,4,\dots) \lra (0,2,3, \dots),
\]
which correspond to the partitions
\[
\big(1^{n-1}\big) \lra \cdots \lra (1) \lra ()
\]
with Schur functions
\[
s_{(1^i)} = e_i
\]
(an elementary symmetric function; see \cite[Chapter~I, equation~(3.8)]{M} for the relation between
Schur functions of the conjugate partitions).
Whence
\[
s_{(1^i)}(x, 0, \dots) = \frac{x^i}{i!}.
\]

For a matrix $g = (a_{ij})\in {\rm GL}_n$ its $(n-1)$-th tau-function
\[
\tau_{n-1}(g)(t_1,\dots, t_{n-1}) = \sum_{j=1}^{n} \Delta_{[1\dots \hat{n-j}\dots n]}(g)e_j\frac{x^{j-1}}{(j-1)!}
\]
(the coefficients being $(n-1)\times (n-1)$-minors),
so
\[
\tau_{n-1}(g)(x, 0, \dots) = \sum_{j=1}^{n} \Delta_{[1\dots \hat{n-j}\dots n]}(g)\frac{x^{j-1}}{(j-1)!} = \ty_{n-1}(g).
\]
\end{Example}

\subsection{Wronskians as tau-functions}\label{section4.6}
 One can express the above as follows. Consider a {\it Tate} vector space of Laurent power series
\[
H = \bk((z)) = \bigg\{\sum_{i\geq j} a_iz^i\,|\, j\in \BZ,\, a_i\in \bk\bigg\}.
\]
It is equipped with two subspaces, $H_+ = \bk[[z]]$ and
\[
H_- = \bigg\{\sum_{- j \leq i\leq 0} a_iz^i, \, j\in \BN\bigg\}.
\]
Let $\Gr = \Gr_\infty^{\infty/2}$ denote the Grassmanian of subspaces $L\subset H$ of the form
\[
L = L_0 + z^kH_+,\qquad k\in \BN,
\]
where $L_0 = \langle f_1, \dots, f_q\rangle $ is generated by a finite number of Laurent {\it polynomials}
$f_i(z)\in \bk\big[z, z^{-1}\big]$, cf.\ \cite[Section~8]{SW}.

In other words, such $L$ should admit a {\it topological} base of the form
\[
\big\{ f_1(z), \dots, f_q(z), z^k, z^{k+1}, \dots\,|\, f_i(z)\in \bk\big[z, z^{-1}\big]\big\}.
\]

 For example, in Section~\ref{section3.5.3} above we see a description of embeddings
\[
\BP^{n-1} = \Gr_n^1\hra \Gr,\qquad
\big(\BP^{n-1}\big)^\vee = \Gr_n^{n-1}\hra \Gr.
\]
To each $L\in \Gr$ there corresponds a tau-function
\[
\tau_L(t) = \tau_L(t_1, t_2, \dots),
\]
cf.\ \cite[Proposition 8.3]{SW}.

Given a sequence of polynomials
\[
\ff = (f_1(z), \dots, f_m(z)) \subset \bk[[z]]_{\leq d}
\]
of degree $\leq d$, $m\leq d$, let us associate to it a subspace
\[
L(\ff) = \big\langle f_1\big(z^{-1}\big), \dots, f_m\big(z^{-1}\big), z, z^{2}, \dots \big\rangle \subset H
\]
belonging to $\Gr$.

\begin{Theorem}\label{theorem4.6.1}
\[
\tau_{L(\ff)}(z,0, \dots) = W(\ff).
\]
\end{Theorem}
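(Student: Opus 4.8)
The plan is to run, for an arbitrary sequence $\ff$, the same two-sided expansion that underlies the proofs of Theorems~\ref{theorem4.4.2} and~\ref{theorem4.5.2}, and to reconcile the two sides through the hook lemma (Lemma~\ref{lemma4.4.2.1}). First I would collect the coefficients of $\ff$ into a rectangular matrix $M\in\Mat_{m,d+1}(\bk)$ so that, in the notation of Section~\ref{section2.2}, $\ff=\fb(M)_{\leq m}$ and hence $W(\ff)=W(\fb(M)_{\leq m})$. Applying Theorem~\ref{theorem3.3} with ambient index $n=d+1$ and $i=m$ then renders the Wronskian side as the explicit series
\[
W(\ff)(x)=\sum_{I\in\CC_{d+1}^m}\Delta_{[m],I}(M)\,m(I)\frac{x^{\fl(I)}}{\fl(I)!},
\]
the coefficients being the maximal minors of $M$.

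Next I would compute the left-hand side. By the Sato formula for the tau-function of a point of $\Gr$ (\cite[Proposition~8.3]{SW}) one has $\tau_{L(\ff)}(t)=\sum_S\pi_S(L(\ff))\,s_{\lambda(S)}(t)$, the sum running over the semi-infinite cells $S$ of the virtual dimension cut out by $L(\ff)$, with $\pi_S$ the associated Plücker coordinate. Because $L(\ff)$ contains the entire tail $\langle z,z^2,\dots\rangle$ and departs from the vacuum only in the bounded range of degrees $-d,\dots,0$, the only nonzero $\pi_S(L(\ff))$ are those attached to cells supported in that range; the substitution $z\mapsto z^{-1}$ in the definition of $L(\ff)$ reverses the order of the coefficients, and I would identify these Plücker coordinates with the maximal minors $\Delta_{[m],I}(M)$ under the finite-to-semi-infinite embedding and the transpose correspondence of Section~\ref{section4.5}. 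Setting $t_2=t_3=\dots=0$ and $t_1=z$ and invoking Claim~\ref{claim4.3.1}(i) to replace each Schur summand by $\fh(S)z^{\ell(S)}$, I obtain
\[
\tau_{L(\ff)}(z,0,\dots)=\sum_S\pi_S(L(\ff))\,\fh(S)\,z^{\ell(S)}.
\]

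Finally I would match the two series. Under the bijection $S\leftrightarrow I$ the coefficients $\pi_S(L(\ff))$ and $\Delta_{[m],I}(M)$ coincide (up to the transpose induced by $z\mapsto z^{-1}$), while the hook lemma supplies $\fh(\nu(I))=m(I')/\fl(I')!$, so that, together with the length symmetry $\fl(I)+\fl(I')$ being constant, the combinatorial weights $\fh(S)z^{\ell(S)}$ and $m(I)x^{\fl(I)}/\fl(I)!$ are carried into one another. The reciprocal built into $L(\ff)$ is precisely what cancels the reciprocal that otherwise intervenes in Theorems~\ref{theorem4.4.2} and~\ref{theorem4.5.2}, so the answer comes out as $W(\ff)$ itself rather than its reciprocal. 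The main obstacle is exactly this bookkeeping: verifying that each $\pi_S(L(\ff))$ equals the claimed minor with the correct sign, that the virtual-dimension shift of Section~\ref{section4.5} realises the transpose $I\leftrightarrow I'$, and that $\ell(S)$ on the tau side is matched to the correct length ($\fl(I)$ or $\fl(I')$) so that the hook lemma aligns the two expansions. Once these identifications are in place, equality of the two series, and hence the theorem, is immediate.
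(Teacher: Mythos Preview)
Your proposal is correct and follows exactly the route the paper takes. The paper's own ``proof'' of Theorem~\ref{theorem4.6.1} is a single sentence declaring it a reformulation of Theorems~\ref{theorem4.4.2} and~\ref{theorem4.5.2}; what you have written is precisely the unpacking of that reformulation---the same two-sided expansion via Theorem~\ref{theorem3.3} on the Wronskian side, Claim~\ref{claim4.3.1}(i) on the tau side, and Lemma~\ref{lemma4.4.2.1} to match---together with the observation that the substitution $z\mapsto z^{-1}$ in the definition of $L(\ff)$ absorbs the reciprocal.
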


This statement is a reformulation of Theorems~\ref{theorem4.4.2} and \ref{theorem4.5.2}.

\subsection{Full flags and MKP}\label{section4.7} For $n\in \BZ_{\geq 2}$ define a {\it semi-infinite flag
space} $\Fl_n^{\infty/2}$ whose elements are sequences of subspaces
\[
L_1\subset \dots \subset L_{n-1}\subset H = \bk((z)),\qquad L_i\in \Gr\subset \Gr_\infty^{\infty/2},\qquad
\dim L_i/L_{i-1} = 1.
\]
This is a subspace of the semi-infinite flag space considered in \cite[Section~8]{KP}, whose elements
parametrize the rational solutions of the modified Kadomtsev--Petviashvili hierarchy.

To each flag $F = (L_1\subset \dots \subset L_{n-1})\in \Fl_n^{\infty/2}$ we assign
its tau-function which is by definition a collection of $n - 1$ grassmanian tau-functions:
\[
\tau_F(t) := (\tau_{L_1}(t), \dots, \tau_{L_{n-1}}(t)),\qquad t = (t_1, t_2, \dots).
\]

Let $g = (a_{ij})\in {\rm GL}_n(\bk)$, we associate to it as above $n$ polynomials
\[
b_i(z) = \sum_{j=0}^{n-1} a_{i,j+1}z^j,\qquad 1\leq i \leq n.
\]
We denote
\[
L_i(g) := L(b_1(z), \dots, b_i(z))\in \Gr.
\]
Taking all $1\leq i\leq n - 1$ we get a flag
\[
F(g) = (L_1(g)\subset \dots \subset L_{n-1}(g))\in \Fl^{\infty/2}_n
\]
and can consider the corresponding tau-function
\[
\tau_{F(g)}(t) = (\tau_{L_1(g)}(t), \dots, \tau_{L_{n-1}(g)}(t)).
\]

\begin{Claim}\label{claim4.7.1}
For $g\in {\rm GL}_n(\bk)$ consider its image under the Wronskian map
\[
\fW(g) = (y_1(g)(x), \dots, y_n(g)(x)).
\]
Then
\[
\tilde\fW(g) := (\ty_1(g)(x), \dots, \ty_n(g)(x))
\]
coincides with the initial value $\tau_{F(g)}(x, 0,\dots)$ of the tau-function $\tau_{F(g)}$ of the MKP hierarchy
corresponding to the semi-infinite flag $F(g)$.
\end{Claim}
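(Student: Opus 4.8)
The plan is to reduce the flag-level statement to the grassmannian results already established, applying them separately to each layer $L_i(g)$ of the flag; the flag structure itself carries no new content beyond packaging. As a preliminary observation, since $b_1(z), \dots, b_{i+1}(z)$ are linearly independent for $g \in {\rm GL}_n(\bk)$, we have $L_i(g) = L(b_1, \dots, b_i) \subset L(b_1, \dots, b_{i+1}) = L_{i+1}(g)$ with one-dimensional quotient, so $F(g)$ is indeed a point of $\Fl_n^{\infty/2}$ and it suffices to match $\tau_{L_i(g)}(x,0,\dots)$ with $\ty_i(g)(x)$ for each $1 \leq i \leq n-1$.

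First I would fix such an $i$ and identify $L_i(g) \in \Gr$ with the image, under the semi-infinite embedding of Section~\ref{section4.5}, of the point of the finite Grassmanian $\Gr_n^i$ spanned by the first $i$ rows of $g$. The $i$ Laurent polynomials $b_1(z^{-1}), \dots, b_i(z^{-1})$ together with $z, z^2, \dots$ span $L_i(g)$, and the reciprocal $z \mapsto z^{-1}$ built into the definition of $L(\ff)$ is exactly what will eventually produce the reciprocal polynomial $\ty_i$ rather than $y_i$ on the Wronskian side. I would then record that the Plücker coordinates of $L_i(g)$, viewed as a point of $\Gr_n^i$, are the maximal minors $\Delta_{[i],I}(g)$ for $I \in \CC_n^i$, and write the grassmanian tau-function in the form $\tau_{L_i(g)}(t) = \sum_{I \in \CC_n^i} \Delta_{[i],I}(g)\, s_{\nu(I)}(t)$, where $\nu(I)$ is the partition attached to $I$ under the embedding $\CC_n^i \hookrightarrow \CC_{\infty/2}^{d(n,i)}$ with virtual dimension $d(n,i)$ chosen so that $\nu(I_{\max}) = ()$. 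This expansion is legitimate by Claim~\ref{claim4.5.0}, which identifies each Schur function $s_{\nu(I)}(h)$ with the Toeplitz minor $\Delta_{I^t}(T)$ and guarantees dependence only on $h_1, \dots, h_{n-1}$.

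Next I would specialize $t_2 = t_3 = \dots = 0$. Using the initial-value formula \eqref{eq4.3.1}, under which $h_j \mapsto x^j/j!$, together with Claim~\ref{claim4.3.1} and the hook lemma (Lemma~\ref{lemma4.4.2.1}), each $s_{\nu(I)}(x,0,\dots)$ collapses to $\fh(\nu(I))\, x^{\fl(I)}$, and Theorem~\ref{theorem3.3} identifies the resulting sum, after the reciprocal substitution $x^{d_i} y_i(g)(x^{-1})$, with $\ty_i(g)(x)$. This chain of identifications is precisely the content of Theorem~\ref{theorem4.5.2} in the arbitrary virtual dimension case, so rather than redo the computation I would simply invoke that theorem to conclude $\tau_{L_i(g)}(x,0,\dots) = \ty_i(g)(x)$ for each $i$. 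Assembling the components over $1 \leq i \leq n-1$ then gives $\tau_{F(g)}(x,0,\dots) = (\tau_{L_1(g)}(x,0,\dots), \dots, \tau_{L_{n-1}(g)}(x,0,\dots)) = (\ty_1(g), \dots, \ty_{n-1}(g)) = \tilde\fW(g)$, which is the assertion.

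The main obstacle I anticipate is the bookkeeping of the reciprocal and transpose operations together with the correct virtual dimension. One must check that the substitution $z \mapsto z^{-1}$ in the definition of $L(\ff)$, the transpose $I \mapsto I^t$ relating Schur functions to Toeplitz minors in Claim~\ref{claim4.5.0}, and the reciprocal polynomial $\ty_i$ all align consistently with the choice of $d(n,i)$, so that the Wronskian side emerges as $\ty_i(g)$ and not, say, $y_i(g)$ itself or a function attached to a wrongly-shifted partition. Once this alignment is pinned down for a single index $i$, the rest is a direct appeal to Theorem~\ref{theorem4.5.2}, and the flag-level claim follows component by component with no further work.
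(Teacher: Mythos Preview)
Your proposal is correct and matches the paper's approach exactly: the paper's entire proof is the single sentence ``This is an immediate consequence of the Grassmanian case,'' and you have simply unpacked what that sentence means by applying Theorem~\ref{theorem4.5.2} componentwise to each $L_i(g)$. The extra bookkeeping you flag (reciprocal, transpose, virtual dimension) is already absorbed into the statement of Theorem~\ref{theorem4.5.2}, so once you invoke it there is nothing further to check.
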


This is an immediate consequence of the Grassmanian case.

\begin{Example}\label{example4.7.1}
Let $n = 4$. Given $g = (a_{ij})\in {\rm GL}_4(\bk)$, we associate to it
four Laurent polynomials encoding its rows
\[
b_i(z) = a_{i1}z^{-2} + \dots + a_{i4}z,\qquad 1\leq i \leq 4,
\]
and a semi-infinite flag
\[
F(g) = (L_1(g)\subset L_2(g) \subset L_3(g))\in \Fl_4^{\infty/2},
\]
where
\begin{gather*}
L_1(g) = \langle g_1(z)\rangle + z^2H_+, \qquad L_2(g) = \langle g_1(z), g_2(z)\rangle + z^2H_+,\\
L_3(g) = \langle g_1(z), g_2(z), g_3(z)\rangle + z^2H_+.
\end{gather*}

We have
\begin{gather*}
\tau_1(g)(t_1, t_2, t_3) = \sum_{j=0}^3 a_{1,j+1}h_{3-j} = a_{14} + a_{13}t_1 + a_{12}
\left(\frac{t_1^2}{2} + t_2 \right) + a_{11}\left(\frac{t_1^3}{6} + t_1t_2 + t_3\right),\\
\ty_1(g)(x) = \sum_{j=0}^3 a_{1,4-j}\frac{x^j}{j!},
\end{gather*}
see Example~\ref{example4.5.3}(a);
\begin{gather*}
\tau_2(g)(t_1, t_2, t_3) = \Delta_{34}(g) + \Delta_{24}(g)t_1 + (\Delta_{14}(g) + \Delta_{23}(g) )\frac{t_1^2}{2} +
 (\Delta_{14}(g) - \Delta_{23}(g) )t_2\\
\hphantom{\tau_2(g)(t_1, t_2, t_3) =}{} + \Delta_{13}(g)\left(\frac{t_1^3}{3} - t_3\right) + \Delta_{12}(g)
\left(\frac{t_1^4}{12} + t_2^2 - t_1t_3\right),\\
\ty_2(g)(x) = \Delta_{34}(g) + \Delta_{24}(g)x +(\Delta_{14}(g) + \Delta_{23}(g))\frac{x^2}{2} +
 \Delta_{13}(g)\frac{x^3}{3} + \Delta_{12}(g)\frac{x^4}{12},
\end{gather*}
see \eqref{eq4.4.3}, \eqref{eq4.4.5};
\begin{gather*}
\tau_3(g)(t_1, t_2, t_3) = \Delta_{123}(g) + \Delta_{124}(g)t_1 + \Delta_{134}(g)
 \left(\frac{t_1^2}{2} - t_2\right) + \Delta_{234}(g)\left(\frac{t_1^3}{6} - t_1t_2 + t_3\right),\\
\ty_3(g)(x) = \Delta_{123}(g) + \Delta_{124}(g)x + \Delta_{134}(g)\frac{x^2}{2} + \Delta_{234}(g)\frac{x^3}{6},
\end{gather*}
see Example~\ref{example4.5.3}(b).
\end{Example}

\section[W5 and Desnanot-Jacobi identities]{$\boldsymbol{W5}$ and Desnanot--Jacobi identities}\label{section5}

\subsection[W5 identity]{$\boldsymbol{W5}$ identity} Recall a formula from \cite[Section~2.1]{SV}.
Let
\[
\ff = (f_1(x), f_2(x), \dots)
\]
be a sequence of functions. For a totally ordered finite subset
\[
A = \{i_1, \dots, i_a\}\subset \BZ_{>0}
\]
we denote
\[
\ff_A = (f_{i_1}, \dots, f_{i_a}), \qquad W(A) := W(\ff_A).
\]
We write $[a] = \{1, \dots, a\}$.

\begin{Lemma}\label{lemma5.1.1} Let
\[
A = [a+1], \qquad B = [a]\cup \{a + 2\}.
\]
Then
\begin{gather}
W(W(A), W(B)) = W(A\cap B)\cdot W(A\cup B).
\label{eq5.1.1}
\end{gather}
\end{Lemma}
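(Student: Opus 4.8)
The plan is to realize all six Wronskian-type quantities entering \eqref{eq5.1.1} as minors of a single $(a+2)\times(a+2)$ matrix and then to invoke the classical Desnanot--Jacobi (Dodgson condensation) identity, so that the whole lemma becomes a transcription of that identity.

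First I would record the column-replacement form of the derivative of a Wronskian: for functions $g_1,\dots,g_k$ one has
\[
W(g_1,\dots,g_k)' = \det\big(g_i^{(j)}\big)_{1\le i\le k,\ j\in\{0,\dots,k-2,k\}},
\]
i.e.\ the derivative is obtained from the Wronskian matrix by replacing its last column (the $(k-1)$-st derivatives) by the column of $k$-th derivatives. This is immediate on differentiating the determinant column by column: every term in which a column of order $<k-1$ is differentiated repeats the next column and hence vanishes, leaving only the term where the top column is differentiated. (This is just the ``canonical'' repackaging of the product-rule formula $W(g_1,\dots,g_k)'=\sum_i W(g_1,\dots,g_i',\dots,g_k)$ recorded in Section~\ref{section2}.)

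Next, since $A\cup B=[a+2]$, set $M=\CW(\ff_{[a+2]})$, the full $(a+2)\times(a+2)$ Wronskian matrix of $(f_1,\dots,f_{a+2})$, with rows indexed by the functions $1,\dots,a+2$ and columns by derivative orders $0,\dots,a+1$. Writing $M_{\hat r,\hat c}$ for the matrix with row $r$ and column $c$ deleted and $M_{\widehat{rs},\widehat{cd}}$ for the deletion of two rows and two columns, I would identify $\det M=W(A\cup B)$; the central minor obtained by deleting the last two rows and last two columns as $W(A\cap B)=W([a])$; and the four corner minors built from rows and columns $a+1,a+2$ as
\[
\det M_{\widehat{a+2},\widehat{a+2}}=W(A),\quad \det M_{\widehat{a+1},\widehat{a+2}}=W(B),\quad \det M_{\widehat{a+2},\widehat{a+1}}=W(A)',\quad \det M_{\widehat{a+1},\widehat{a+1}}=W(B)'.
\]
The first two are clear; the last two use the derivative formula above, since deleting the column of derivative order $a$ while retaining that of order $a+1$ is exactly the column replacement producing the derivative of the corresponding $(a+1)$-function Wronskian.

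Finally I would apply the Desnanot--Jacobi identity to $M$ with the two bottom rows and two right-most columns as distinguished corners,
\[
\det M\cdot \det M_{\widehat{a+1,a+2},\widehat{a+1,a+2}} = \det M_{\widehat{a+1},\widehat{a+1}}\,\det M_{\widehat{a+2},\widehat{a+2}} - \det M_{\widehat{a+1},\widehat{a+2}}\,\det M_{\widehat{a+2},\widehat{a+1}},
\]
and substitute the identifications, turning this into $W(A\cup B)\,W(A\cap B)=W(B)'\,W(A)-W(B)\,W(A)'$; since $W(W(A),W(B))=W(A)W(B)'-W(A)'W(B)$ this is precisely \eqref{eq5.1.1}. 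The step I expect to demand the most care is the bookkeeping of the corner minors — especially checking that the two ``mixed'' corners are $W(A)'$ and $W(B)'$ via the column-replacement formula, and confirming that with the last two rows and columns as corners the Desnanot--Jacobi identity carries no compensating sign (a short direct check on a $3\times3$ model matrix settles this).
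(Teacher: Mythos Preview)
Your proposal is correct and is precisely the argument the paper sketches in Section~\ref{section5.2}: apply Desnanot--Jacobi to the Wronskian matrix $\CW(A\cup B)$, after rewriting $W(W(A),W(B))=W(A)W(B)'-W(A)'W(B)$. The paper only states the classical $(1,n)$-corner version of the identity and leaves the details to the reader; your choice of the last two rows and columns as the distinguished pair, together with the column-replacement description of $W(\cdot)'$, is exactly the bookkeeping that makes those details go through.
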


This is a particular case of \cite[Section~9]{MV}.

\begin{Example}\label{example5.1.2}
\[
W(W(f_1,f_2), W(f_1,f_3)) = f_1W(f_1,f_2,f_3).
\]
\end{Example}

\subsection{Desnanot--Jacobi, aka Lewis Carrol}\label{section5.2} Let $A$ be an $n\times n$ matrix.
Denote by $A_{1n,1n}$ the $(n - 2)\times (n - 2)$ submatrix obtained from $A$ by deleting the first and the $n$-th row
and the first and the $n$-th column, etc.
Then
\begin{gather}
\det A\det A_{1n,1n} = \det A_{1,1}\det A_{n,n} - \det A_{1,n}\det A_{n,1}.
\label{eq5.2.1}
\end{gather}

Let us rewrite \eqref{eq5.1.1} in the form
\[
W(A\cap B)\cdot W(A\cup B) = W(A)W(B)' - W(A)'W(B);
\]
we see that it resembles \eqref{eq5.2.1}.
And indeed, if we apply \eqref{eq5.2.1} to the Wronskian matrix $\CW(A\cup B)$,
we get \eqref{eq5.1.1}. This remark appears in \cite[the beginning of Section~3]{C}.
We leave the details to the reader.

\subsection{Example: Lusztig vs Wronskian mutations} \label{section5.4}
Recall the situation \eqref{eq1.2}, where we suppose that $m = n$.
The main result of \cite{SV} describes the compatibility of the map $\fW$ with multiplication by an {\it upper} triangular
matrix
\[
e_{i,i+1}(c) = I_n + ce_{ij},\qquad e_{ij} = (\delta_{pi}\delta_{qj})_{p,q}\in {\rm GL}_n(\bk).
\]

Let $M\in {\rm GL}_n(\bk)$,
\begin{gather*}
\fb(M) = (b_1(M)(x), b_2(M)(x), b_3(M)(x), \dots),\\
\fW(M) = (y_1(M)(x), y_2(M)(x), y_3(M)(x), \dots).
\end{gather*}

The following example is taken from \cite[proof of Theorem~4.4]{SV}.
We have
\[
\fW(e_{23}M) = (y_1(e_{23}M), y_2(e_{23}M), \dots),
\]
where
\[
\fb(e_{23}M) = (b_1(M), b_2(M) + b_3(M), \dots).
\]
Thus
\[
y_1(e_{23}(c)M) = y_1(M),
\]
whereas
\[
y_2(e_{23}(c)M) = y_2(M) + cW(b_1(M), b_3(M)).
\]
The $W5$ identity implies
\begin{gather*}
W(y_2(M), y_2(e_{23}(c)M)) = cW(y_2(M),W(b_1(M), b_3(M)))\\
\hphantom{W(y_2(M), y_2(e_{23}(c)M))}{} =
cW(W(b_1(M), b_2(M),W(b_1(M), b_3(M)))\\
\hphantom{W(y_2(M), y_2(e_{23}(c)M))}{} =
c b_1W(b_1(M), b_2(M), b_3(M)) =c y_1(M)y_3(M).
\end{gather*}
The resulting equation
\begin{gather*}
W(y_2(M), y_2(e_{23}(c)M)) = c y_1(M)y_3(M)
\end{gather*}
is called a {\it Wronskian mutation} equation. It is regarded in~\cite{SV} as a differential equation
of the first order on an unknown function $\ty_2(c, M) = y_2(e_{23}(c)M)$;
together with some initial conditions it determines the polynomial $\ty_2(c, M)$ uniquely from given $y_i(M)$, $1\leq i\leq 3$.

This equation is a modification of a similar differential equation from \cite{MV} whose solution is a~function
\[
\ty'_2(c, M) = y_2(e'_{23}(c)M),
\]
where
\[
e'_{23}(c) = ce_{23}\big(c^{-1}\big) = cI_n + e_{23}.
\]

\subsection*{Acknowledgements}
We are grateful to A.~Kuznetsov for a useful discussion, to V.~Kac for sending us the paper \cite{KP},
and to anonymous referees for correcting some inaccuracies and drawing our attention to~\cite{EG}.
V.G.~has been partially supported by the HSE University Basic Research Program, Russian Academic Excellence Project 5-100,
and by the RSF Grant No.~20-61-46005.

\pdfbookmark[1]{References}{ref}
\LastPageEnding

\end{document}